\documentclass[11pt]{article}

\setlength{\textheight}{23cm} \setlength{\textwidth}{16cm}
\setlength{\oddsidemargin}{0cm} \setlength{\evensidemargin}{0cm}
\setlength{\topmargin}{0cm}
%
%
\usepackage{mathrsfs}
\usepackage{graphicx,amssymb,mathrsfs,amsmath,latexsym,amsfonts,titlesec,amscd,epsfig,cite,color,
                                                                                   }

\def\R{{\mathbb{R}}}
\def\BMO{{\mathrm{BMO}}}
\def\loc{{\mathrm{loc}}}
\DeclareMathOperator*{\essinf}{ess\, inf}
\DeclareMathOperator*{\esssup}{ess\, sup}

\newcommand{\upcite}[1]{\textsuperscript{\textsuperscript{\cite{#1}}}}
\numberwithin{equation}{section} 
\allowdisplaybreaks   
\usepackage{indentfirst} 
\usepackage[amsmath,thmmarks]{ntheorem} 

%
%
\theoremstyle{definition} 
\theoremheaderfont{\normalfont\rmfamily\bf}
\theorembodyfont{\normalfont\rm } \theoremindent0em
\theoremseparator{\hspace{-.5em}}
\newtheorem{theorem}{\indent
                  Theorem}[section]
    \newtheorem{lemma}{\indent  Lemma} [section]

\newtheorem{proposition}{\indent  Proposition}[section]

    \newtheorem{definition}{\indent  Definition} [section]
        \newtheorem{remark}{\indent  Remark}  

%

\newtheorem*{acknowledgments}{\indent Acknowledgments\quad }
\theoremstyle{nonumberplain} 
\theoremheaderfont{\bf\rmfamily} \theorembodyfont{\normalfont \rm }
\theoremindent0em 
\theoremsymbol{$\blacksquare$}
\newtheorem{proof}{\indent Proof}


\makeatletter
%
\makeatother
%
\title{\bf\Large
Boundedness  for fractional Hardy-type operator  on variable exponent Herz-Morrey spaces }
\author{\small \textsc{Jiang-Long Wu,  Wen-Jiao Zhao } 
\\
 {\footnotesize\it Faculty of Information Technology, Macau University of Science and Technology, Macau,  China}\\
{\footnotesize\it Department of Mathematics, Mudanjiang Normal University, Mudanjiang, 157011, China}
  }
\date{} 
%

\begin{document}

\maketitle

\footnote{ 
\textit{AMS} (2010) \textit{Mathematics Subject Classification}:
Primary 42B20; Secondary 47B38. }
\footnote{ 
\textit{Key words and phrases}: Herz-Morrey space; Hardy operator;  Riesz potential; variable exponent space
} 

\begin{abstract}
In this paper, the fractional Hardy-type operator of variable order $\beta(x)$ is shown to be bounded
from the variable exponent  Herz-Morrey spaces $M\dot{K}_{p_{_{1}},q_{_{1}}(\cdot)}^{\alpha(\cdot),\lambda}(\R^{n})$ into the weighted space $M\dot{K}_{p_{_{2}},q_{_{2}}(\cdot)}^{\alpha(\cdot),\lambda}(\R^{n},\omega)$, where $\alpha(x)\in L^{\infty}(\mathbb{R}^{n})$ be log-H\"{o}lder continuous both at the origin and at infinity, $\omega=(1+|x|)^{-\gamma(x)}$ with some $\gamma(x)>0$ and $ 1/q_{_{1}}(x)-1/q_{_{2}}(x)=\beta(x)/n$ when $q_{_{1}}(x)$ is not necessarily constant at infinity.
\end{abstract}

\section{Introduction}

Let $f$ be a locally integrable function on $\R^{n}$. The $n$-dimensional Hardy operator is defined by
$$
\mathscr{H}(f)(x):= \frac{1}{|x|^{n}} \int_{|t|<|x|} f(t) \mathrm{d}t,\ \  \ x\in \R^{n}\setminus \{0\}.
$$

In 1995, Christ and Grafakos\upcite{CG} obtained the result for the
boundedness of $\mathscr{H}$ on $L^{p}(\R^{n})\ (1<p<\infty)$
spaces, and they also found the exact operator norms of $\mathscr{H}$ on
this space. In 2007, Fu et al\upcite{FLLW} gave the central $\BMO$ estimates for
commutators of  $n$-dimensional fractional and Hardy  operators. And recently, the first author\upcite{WW,W,WL,WZ,WZ1,ZW,W1} also considers the boundedness for Hardy  operator and its commutator in (variable exponent) Herz-Morrey spaces.

Nowadays there is an evident increase of investigations related to both the theory of the variable exponent function spaces   and the operator theory in these spaces. This is caused by possible applications to models with non-standard local growth (in elasticity theory, fluid mechanics, differential equations and image processing, see for example \cite{R,DR,CLR,HHLT,SS} and references therein) and is based on the breakthrough result on boundedness of the Hardy-Littlewood maximal operator in these spaces
 (for more details see \cite{KR,CDF,CFMP,CFN,D1,D2,DHHMS,K,L,N,PR} et al).

 We first define the $n$-dimensional  fractional Hardy-type operators with variable order $\beta(x)$  as follows.

\begin{definition}\ \   \label{def.1.1}
Let $f$ be a locally integrable function on $\R^{n},~ 0\le\beta(x)<n$. The $n$-dimensional  fractional Hardy-type operators of variable order $\beta(x)$ are defined by
 \begin{alignat}{2}
 \mathscr{H}_{\beta(\cdot)}(f)(x) &:= \frac{1}{|x|^{n-\beta(x)}} \int_{|t|<|x|} f(t)\mathrm{d}t,  \tag{\theequation a}  \label{equ.1.a}\\
\mathscr{H}^{\ast}_{\beta(\cdot)}(f)(x) &:=\int_{|t|\ge|x|} \frac{f(t)}{|t|^{n-\beta(x)}} \mathrm{d}t,  \tag{\theequation b}   \label{equ.1.b}
\end{alignat}
where $x\in \R^{n}\setminus \{0\}$.
\end{definition}

Obviously, when $\beta(x)=0$, $\mathscr{H}_{\beta(\cdot)}$ is just $\mathscr{H}$, and denote by $\mathscr{H}^{\ast}:=\mathscr{H}^{\ast}_{\beta(\cdot)}=\mathscr{H}^{\ast}_{0}$. And when $\beta(x)$ is constant, $\mathscr{H}_{\beta(\cdot)}$ and $\mathscr{H}^{\ast}_{\beta(\cdot)}$ will become $\mathscr{H}_{\beta}$ and $\mathscr{H}^{\ast}_{\beta}$\upcite{FLLW} respectively.

The Riesz-type potential operator with variable order $\beta(x)$ is defined by
\begin{equation} \label{equ.1.1}
I_{\beta(\cdot)}(f)(x)=\int_{\R^{n}} \frac{f(y)}{|x-y|^{n-\beta(x)}}  \mathrm{d}y,\  \ 0<\beta(x)<n.
\end{equation}

In 2004, Diening\upcite{D} proved Sobolev's theorem for the potential  $I_{\beta}$ on
the whole space $\R^{n}$ assuming that $p(x)$ is constant at infinity ($p(x)$ is always constant outside some large ball) and satisfies the same logarithmic condition as in \cite{S}. Another progress for unbounded domains is the  result of Cruz-Uribe et al\upcite{CFN} on the boundedness of the maximal operator in unbounded domains for exponents $p(x)$ satisfying the logarithmic smoothness condition both locally and at infinity.

In \cite{KS}, Kokilashvili and Samko prove Sobolev-type theorem for the potential $I_{\beta(\cdot)}$ from the space $L^{p(\cdot)}(\R^{n})$  into the weighted space $L^{q(\cdot)}_{\omega}(\R^{n})$  with the power weight $\omega$ fixed to infinity, under the logarithmic condition for  $p(x)$ satisfied locally and at infinity, not supposing that  $p(x)$ is constant at infinity but assuming that $p(x)$ takes its minimal value at infinity.

In addition, the theory of function spaces with variable exponent has rapidly made progress in the past twenty years
since some elementary properties were established by Kov\'{a}\v{c}ik-R\'{a}kosn\'{i}k\upcite{KR}.

In 2012,  Almeida and  Drihem\upcite{AD} discuss the boundedness of a wide class of sublinear operators on Herz spaces $K_{q(\cdot)}^{\alpha(\cdot),p}(\R^{n})$ and $\dot{K}_{q(\cdot)}^{\alpha(\cdot),p}(\R^{n})$ with variable exponent $\alpha(\cdot)$ and $q(\cdot)$.
 Meanwhile, they also established Hardy-Littlewood-Sobolev theorems for fractional integrals on variable Herz spaces.
  In 2013, Samko\upcite{S1,S2} introduced a new Herz type function spaces with variable exponent,  where all the three parameters are variable, and proved the boundedness of some sublinear operators (also ref. \cite{IN}).
In 2014, Izuki and Noi\upcite{IN1} concerned with duality and reflexivity of   Herz spaces $K_{q(\cdot)}^{\alpha(\cdot),p(\cdot)}(\R^{n})$ and $\dot{K}_{q(\cdot)}^{\alpha(\cdot),p(\cdot)}(\R^{n})$  with variable exponents.
Moreover,  in recently, Wu\upcite{W1} considers the boundedness for fractional Hardy-type operator on Herz-Morrey spaces $ M\dot{K}_{p, q(\cdot)}^{\alpha(\cdot), \lambda}(\mathbb{R}^{n})$  with variable exponent $q(\cdot)$ but fixed $ \alpha\in \mathbb{R}$ and $p\in(0,\infty)$.

 Motivated by the above results, we are to investigate mapping properties of the  fractional Hardy-type operators $\mathscr{H}_{\beta(\cdot)}$ and $\mathscr{H}^{\ast}_{\beta(\cdot)}$  within the framework of the variable exponent Herz-Morrey spaces $ M\dot{K}_{p, q(\cdot)}^{\alpha(\cdot), \lambda}(\mathbb{R}^{n})$.

Throughout this paper, we will denote by $|S|$ the Lebesgue measure  and  by $\chi_{_{\scriptstyle S}}$ the characteristic function  for a measurable set  $S\subset\R^{n}$; $B(x,r)$ is the ball cenetered at $x$ and of radius $r$;~$B_{0}=B(0,1)$. $C$  denotes a constant that is independent of the main parameters involved but
whose value may differ from line to line. For any index $1< q(x)< \infty$, we denote by $q'(x)$ its conjugate index,
namely, $q'(x)=\frac{q(x)}{q(x)-1}$.  For $A\sim D$, we mean that there is a constant $C > 0$ such that $C^{-1}D\le A \le CD$.

\section{Preliminaries}

In this section, we give the definition of Lebesgue and Herz-Morrey spaces with variable exponent, and give basic properties
and useful lemmas.

\subsection{Function spaces with variable exponent}

Let $\Omega$ be a measurable set in $\R^{n}$ with $|\Omega|>0$. We first define variable exponent Lebesgue spaces.

\begin{definition} \label{def.2.1}
\ \ Let ~$ q(\cdot): \Omega\to[1,\infty)$ be a measurable function.

 \begin{list}{}{} 
\item[(I)]  \ The Lebesgue spaces with variable exponent $L^{q(\cdot)}(\Omega)$ is defined by
  $$ L^{q(\cdot)}(\Omega)=\{f~ \mbox{is measurable function}:  F_{q}(f/\eta)<\infty ~\mbox{for some constant}~ \eta>0\}, $$
  where $F_{q}(f):=\int_{\Omega} |f(x)|^{q(x)} \mathrm{d}x$. The Lebesgue space $L^{q(\cdot)}(\Omega)$ is a Banach function space with respect to the norm
  \begin{equation*}
   \|f\|_{L^{q(\cdot)}(\Omega)}=\inf \Big\{ \eta>0:  F_{q}(f/\eta)=\int_{\Omega} \Big( \frac{|f(x)|}{\eta} \Big)^{q(x)} \mathrm{d}x \le 1 \Big\}.
\end{equation*}
 \item[(II)] \ The space $L_{\loc}^{q(\cdot)}(\Omega)$ is defined by
  $$ L_{\loc}^{q(\cdot)}(\Omega)=\{f ~\mbox{is measurable}: f\in L^{q(\cdot)}(\Omega_{0}) ~\mbox{for all compact  subsets}~ \Omega_{0}\subset \Omega\}.  $$

 \item[(III)] \ The weighted Lebesgue space $L_{\omega}^{q(\cdot)}(\Omega)$ is defined by as the set of all measurable functions for which $$\|f\|_{L^{q(\cdot)}_{\omega}(\Omega)}=\|\omega f\|_{L^{q(\cdot)}(\Omega)}<\infty.$$
\end{list}
\end{definition}

Next we define some classes of variable  exponent functions. Given a function $f\in L_{\loc}^{1}(\R^{n})$, the Hardy-Littlewood maximal operator $M$ is defined by
$$Mf(x)=\sup_{r>0} r^{-n} \int_{B(x,r)} |f(y)| \mathrm{d}y,$$
where $B(x,r)=\{y\in \R^{n}: |x-y|<r\}$.

\begin{definition} \label{Def.2.2}\ \
 \ Given a measurable function $q(\cdot)$ defined on $\R^{n}$, we write
$$q_{-}:=\essinf_{x\in \R^{n}} q(x),\ \ q_{+}:= \esssup_{x\in \R^{n}} q(x).$$

\begin{list}{}{}
\item[(I)]\   $q'_{-}=\essinf\limits_{x\in \R^{n}} q'(x)=\frac{q_{+}}{q_{+}-1},\ \ q'_{+}= \esssup\limits_{x\in \R^{n}} q'(x)=\frac{q_{-}}{q_{-}-1}.$

\item[(II)]\ Denote by $\mathscr{P}(\R^{n})$ the set of all measurable functions $ q(\cdot): \R^{n}\to(1,\infty)$ such that
$$1< q_{-}\le q(x) \le q_{+}<\infty,\ \ x\in \R^{n}.$$

\item[(III)]\  The set $\mathscr{B}(\R^{n})$ consists of all  measurable functions  $q(\cdot)\in\mathscr{P}(\R^{n})$ satisfying that the Hardy-Littlewood maximal operator $M$ is bounded on $L^{q(\cdot)}(\R^{n})$.

\end{list}

\end{definition}

\begin{definition} \label{Def.2.3}\ \
 \ Let $\alpha(\cdot)$ be a real-valued function on  $\R^{n}$.

\begin{list}{}{}
\item[(I)]\   The set $\mathscr{C}^{\log}_{loc}(\R^{n})$  consists of all local $\log$-H\"{o}lder continuous functions $ \alpha(\cdot)$   satisfies
 \begin{equation*}    
 |\alpha(x)-\alpha(y)| \le \frac{-C}{\ln(|x-y|)},  \ \ \  |x-y|\le 1/2,\ x,y \in \R^{n}.
\end{equation*}

\item[(II)]\  The set $\mathscr{C}^{\log}_{0}(\R^{n})$ consists of all  $\log$-H\"{o}lder continuous functions $ \alpha(\cdot)$ at origin   satisfies
    \begin{equation}  \label{equ.2.2}
 |\alpha(x)-\alpha(0)| \le \frac{C}{\ln(e+\frac{1}{|x|})},  \ \ \ \ x \in \R^{n}.
\end{equation}

\item[(III)]\  The set $\mathscr{C}^{\log}_{\infty}(\R^{n})$ consists of all  $\log$-H\"{o}lder continuous functions $ \alpha(\cdot)$ at infinity   satisfies
\begin{equation}\label{equ.2.3}
 |\alpha(x)-\alpha_{\infty}| \le \frac{C_{\infty}}{\ln(e+|x|)},  \ \ \ x \in \R^{n},
\end{equation}
 where $\alpha_{\infty}=\lim\limits_{|x|\to \infty}\alpha(x)$.

\item[(IV)]\  Denote by  $\mathscr{C}^{\log}(\R^{n}):=\mathscr{C}^{\log}_{loc}(\R^{n})\cap \mathscr{C}^{\log}_{\infty}(\R^{n})$ the set of all global $\log$-H\"{o}lder continuous functions $ \alpha(\cdot)$.

\end{list}

\end{definition}

\begin{remark}\quad The $\mathscr{C}^{\log}_{\infty}(\R^{n})$ condition is equivalent to the uniform continuity condition
\begin{equation*}  
  |q(x)-q(y)| \le \frac{C}{\ln(e+|x|)},  \ \ \  |y|\ge|x|,\ x,y \in \R^{n}.
\end{equation*}
The $\mathscr{C}^{\log}_{\infty}(\R^{n})$ condition was originally defined in this form in \cite{CFN}.
\end{remark}

  Next we define variable exponent Herz-Morrey spaces $ M\dot{K}_{p, q(\cdot)}^{\alpha(\cdot), \lambda}(\mathbb{R}^{n})$. Let $B_{k}=\{x\in\R^{n}:|x|\leq 2^{k}\}, A_{k}=\ B_{k}\setminus  B_{k-1}$ and $\chi_{_{k}}=\chi_{_{A_{k}}}$ for $k\in \mathbb{Z}$.

\begin{definition} \label{def.2.4}\ \
Suppose that $0\leq \lambda < \infty,~ 0<p< \infty$,  $q(\cdot) \in \mathscr{P}(\mathbb{R}^{n})$ and $\alpha(\cdot):\mathbb{R}^{n}\to \mathbb{R}$ with $\alpha(\cdot)\in L^{\infty}(\mathbb{R}^{n})$. The variable exponent Herz-Morrey spaces $ M\dot{K}_{p, q(\cdot)}^{\alpha(\cdot), \lambda}(\mathbb{R}^{n})$ is definded by
  $$  M\dot{K}_{p, q(\cdot)}^{\alpha(\cdot), \lambda}(\mathbb{R}^{n})=\Big\{f\in L_{\loc}^{q(\cdot)}(\mathbb{R}^{n}\backslash\{0\}):
 \|f\|_{M\dot{K}_{p, q(\cdot)}^{\alpha(\cdot), \lambda}(\mathbb{R}^{n})}<\infty \Big\},  $$
where
  $$ \|f\|_{M\dot{K}_{p, q(\cdot)}^{\alpha(\cdot), \lambda}(\mathbb{R}^{n})}=\sup_{k_{0}\in \mathbb{Z}}2^{-k_{0}\lambda}
 \Big(\sum_{k=-\infty}^{k_{0}}\|2^{k\alpha(\cdot)}f\chi_{_{\scriptstyle k}}\|_{L^{^{q(\cdot)}}(\mathbb{R}^{n})}^{p} \Big)^{\frac{1}{p}}.  $$

\end{definition}

Compare the variable Herz-Morrey space $ M\dot{K}_{p, q(\cdot)}^{\alpha(\cdot), \lambda}(\mathbb{R}^{n})$ with the variable Herz space\upcite{AD}  $\dot{K}_{q(\cdot)}^{\alpha(\cdot),p}(\R^{n})$ , where
$$\dot{K}_{q(\cdot)}^{\alpha(\cdot),p}(\R^{n})= \Big\{f\in L_{\loc}^{q(\cdot)}(\mathbb{R}^{n}\backslash\{0\}):
\sum\limits_{k=-\infty}^{\infty} \|2^{k\alpha(\cdot)} f\chi_{_{k}}\|_{L^{q(\cdot)}(\R^{n})}^{p}<\infty \Big\}.$$
Obviously, $M\dot{K}_{p, q(\cdot)}^{\alpha(\cdot), 0}(\mathbb{R}^{n})=\dot{K}_{q(\cdot)}^{\alpha(\cdot),p}(\R^{n})$. When $\alpha(\cdot)$ is constant, we have $M\dot{K}_{p, q(\cdot)}^{\alpha(\cdot), \lambda}(\mathbb{R}^{n})=M\dot{K}_{p, q(\cdot)}^{\alpha, \lambda}(\mathbb{R}^{n})$ (see \cite{W1}). If both $\alpha(\cdot)$ and $q(\cdot)$ are constant, and
$\lambda=0$, then $M\dot{K}_{p, q(\cdot)}^{\alpha(\cdot), \lambda}(\mathbb{R}^{n})=\dot{K}_{q}^{\alpha,p}(\R^{n})$ are classical Herz spaces.

\subsection{Auxiliary propositions  and lemmas }

 In this part we   state some auxiliary propositions  and lemmas which will be needed for proving  our main theorems. And we only describe partial results  we need.

 \begin{proposition}\quad \label{Pro.2.1}
Let $ q(\cdot)\in \mathscr{P}(\R^{n})$.
 \begin{list}{}{}
 \item[(I)]\ If $ q(\cdot)\in \mathscr{C}^{\log}(\R^{n})$, 
 then we have $ q(\cdot)\in \mathscr{B}(\R^{n})$.
 \item[(II)]\  $q(\cdot)\in\mathscr{B}(\R^{n})$ if and only if $q'(\cdot)\in \mathscr{B}(\R^{n})$.
\end{list}
\end{proposition}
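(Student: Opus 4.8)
The plan is to treat the two assertions by different classical machineries: (I) is a direct boundedness theorem for $M$ in the spirit of Cruz-Uribe--Fiorenza--Neugebauer \cite{CFN}, while (II) is a self-duality statement of Diening \cite{D}. In the write-up I would isolate one substantive lemma for each part, quote it, and reduce the proposition to bookkeeping.

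For (I), the goal is $\|Mf\|_{L^{q(\cdot)}(\R^{n})}\le C\|f\|_{L^{q(\cdot)}(\R^{n})}$. First I would normalize so that $\|f\|_{L^{q(\cdot)}}\le1$ and, using the modular description of the norm in Definition \ref{def.2.1}, reduce the claim to the modular bound $F_{q}(Mf)\le C$. Then I would replace $M$ by a dyadic maximal operator (controlling $M$ by finitely many shifted dyadic grids), so that all averages are taken over cubes $Q$. The heart of the matter is a pointwise key estimate of the form
\[
\Big(\frac{1}{|Q|}\int_{Q}|f(y)|\,\mathrm{d}y\Big)^{q(x)}\le C\,\frac{1}{|Q|}\int_{Q}|f(y)|^{q(y)}\,\mathrm{d}y+\frac{C}{|Q|}\int_{Q}(e+|y|)^{-s}\,\mathrm{d}y+C(e+|x|)^{-s},
\]
valid for $x\in Q$ under the normalization $\|f\|_{L^{q(\cdot)}}\le 1$ and for some $s>n$. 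Here the local condition $\mathscr{C}^{\log}_{loc}(\R^{n})$ controls the oscillation of $q(\cdot)$ on small cubes (keeping $|Q|^{q_{-}(Q)-q_{+}(Q)}$ bounded), while the decay condition $\mathscr{C}^{\log}_{\infty}(\R^{n})$ governs large cubes and cubes far from the origin by comparison with $q_{\infty}$; the tail terms are integrable because $s>n$. Integrating in $x$ and summing then gives $F_{q}(Mf)\le C$, where the hypothesis $q(\cdot)\in\mathscr{P}(\R^{n})$, i.e. $q_{-}>1$, is exactly what lets the final summation close, as $p>1$ does in the classical $L^{p}$ theory.

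For (II), I would exploit the self-duality of averaging operators. By the Kov\'{a}\v{c}ik--R\'{a}kosn\'{i}k duality \cite{KR}, the associate space of $L^{q(\cdot)}$ is (norm-equivalent to) $L^{q'(\cdot)}$. The key input is Diening's characterization \cite{D}: for $q(\cdot)\in\mathscr{P}(\R^{n})$, $M$ is bounded on $L^{q(\cdot)}$ if and only if the averaging operators $\mathcal{A}_{\mathcal{Q}}f=\sum_{Q\in\mathcal{Q}}\big(\frac{1}{|Q|}\int_{Q}f\big)\chi_{Q}$ are bounded on $L^{q(\cdot)}$ uniformly over all families $\mathcal{Q}$ of pairwise disjoint cubes. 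Since each $\mathcal{A}_{\mathcal{Q}}$ is a positive self-adjoint operator for the pairing $\int fg$, its operator norm on $L^{q(\cdot)}$ and on the associate space $L^{q'(\cdot)}$ agree up to the fixed duality constants; hence the uniform bound transfers between $q(\cdot)$ and $q'(\cdot)$, yielding $q(\cdot)\in\mathscr{B}(\R^{n})\iff q'(\cdot)\in\mathscr{B}(\R^{n})$.

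I expect the main obstacle to lie in the two quoted lemmas rather than in the assembly. In (I) it is the proof of the key pointwise estimate, which requires a careful split into small versus large cubes and near versus far from the origin, invoking each log-H\"{o}lder condition on its proper range. In (II) it is the nontrivial direction of Diening's equivalence, namely that uniform boundedness of the averaging operators forces boundedness of $M$; the converse is immediate from the pointwise bound $\mathcal{A}_{\mathcal{Q}}f\le Mf$ for $f\ge0$. Both lemmas are available in the cited references, so in the final write-up I would state them precisely and quote them, confining the new work to the normalization, the modular reduction, and the duality bookkeeping.
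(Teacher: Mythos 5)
The paper does not actually prove Proposition \ref{Pro.2.1}: it only attributes part (I) to Cruz-Uribe--Fiorenza--Neugebauer \cite{CFN} and to Nekvinda \cite{N}, and part (II) to Diening \cite{D1} (see also Theorem 1.2 in \cite{CFMP}). Your sketch reconstructs precisely the arguments of those sources --- the modular reduction and pointwise key estimate with log-H\"{o}lder-controlled oscillation and integrable tails for (I), and Diening's characterization of $\mathscr{B}(\R^{n})$ via uniformly bounded averaging operators combined with their self-adjointness and Kov\'{a}\v{c}ik--R\'{a}kosn\'{i}k duality for (II) --- so it is correct and consistent with what the paper quotes. The only caveat is the one you already acknowledge: the two substantive lemmas (the key pointwise estimate and the hard direction of Diening's equivalence) are invoked rather than proved, which puts your write-up at essentially the same level of completeness as the paper's citation; if a self-contained proof were required, those two lemmas are where all the work lies. (Minor point: the averaging-operator characterization is in \cite{D1}, not \cite{D}.)
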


 The  first part in Proposition \ref{Pro.2.1} is independently due to Cruz-Uribe et al\upcite{CFN} and to Nekvinda\upcite{N}  respectively. The second of Proposition \ref{Pro.2.1}  belongs to  Diening\upcite{D1}~(see Theorem 8.1 or Theorem 1.2 in \cite{CFMP}).

\begin{remark}\quad \label{rem.2}
Since
$$|q'(x)-q'(y)|\le \frac{|q(x)-q(y)|}{(q_{-}-1)^{2}},$$
 it follows at once that if $q(\cdot)\in \mathscr{C}^{\log}(\R^{n})$, then so does $q'(\cdot)$---i.e., if the condition hold, then $M$ is bounded on
  $L^{q(\cdot)} (\R^{n})$ and $L^{q'(\cdot)} (\R^{n})$.  Furthermore, Diening has proved general results on Musielak-Orlicz spaces.

\end{remark}

The order $\beta(x)$ of the fractional Hardy-type operators in Definition \ref{def.1.1} is not assumed to be continuous. We assume that it is a measurable function on $\R^n$ satisfying the following assumptions
 \begin{equation} \label{equ.2.5}
\left.
 \begin{aligned}
         \beta_{0}:= \essinf_{x\in \R^{n}} \beta(x) & >0  \\
         \esssup_{x\in \R^{n}} p(x)\beta(x) &<n           \\
         \esssup_{x\in \R^{n}} p(\infty)\beta(x) &<n
                          \end{aligned} \right\}.
                          \end{equation}


In order to prove our main results, we need the Sobolev type theorem for the space $\R^{n}$ which was proved in ref. \cite{KS} for the
exponents $p(x)$ not necessarily constant in a neigbourhood of infinity, but with some ¡®extra¡¯
power weight fixed to infinity and under the assumption that $p(x)$ takes its minimal value at
infinity.
\begin{proposition}\ \ \label{pro.2.2}
Suppose that  $p(\cdot)\in \mathscr{C}^{\log}(\R^{n}) \cap\mathscr{P}(\R^{n})$. Let
\begin{equation}\label{equ.2.6}
1<  p(\infty)\le p(x)\le p_{+}<\infty,
\end{equation}
 and  $\beta(x)$ meet condition (\ref{equ.2.5}).
Then the following weighted Sobolev-type estimate is valid for the operator $I_{\beta(\cdot)}$:
 $$\Big\|(1+|x|)^{-\gamma(x)}I_{\beta(\cdot)}(f)\Big\|_{L^{q(\cdot)}(\R^{n})}\le C\|f\|_{L^{p(\cdot)}(\R^{n})},$$
where
\begin{equation*}
\frac{1}{q(x)}=\frac{1}{p(x)}-\frac{\beta(x)}{n}
\end{equation*}
is the Sobolev exponent and
\begin{equation}\label{equ.2.7}
\gamma(x)=C_{\infty} \beta(x) \Big(1-\frac{\beta(x)}{n}\Big) \le \frac{n}{4} C_{\infty},
\end{equation}
$C_{\infty}$ being the Dini-Lipschitz constant from (\ref{equ.2.3}) which $q(\cdot)$ is replaced by $p(\cdot)$.
\end{proposition}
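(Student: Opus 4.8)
The plan is to follow the Hedberg--Sobolev strategy adapted to the variable exponent setting, the crux being a pointwise domination of $I_{\beta(\cdot)}f$ by the Hardy--Littlewood maximal function, together with a careful quantitative control of the kernel's tail at infinity, which is precisely where the power weight $(1+|x|)^{-\gamma(x)}$ must appear. By the linearity of $I_{\beta(\cdot)}$ and the positive homogeneity of the norm I would first normalize so that $\|f\|_{L^{p(\cdot)}(\R^{n})}=1$, and then reduce the asserted norm bound to the modular estimate
$$\int_{\R^{n}} \Big((1+|x|)^{-\gamma(x)}\,\big|I_{\beta(\cdot)}f(x)\big|\Big)^{q(x)}\,\mathrm{d}x \le C.$$

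For each fixed $x$ I would split the defining integral at a radius $\delta=\delta(x)$ to be chosen,
$$I_{\beta(\cdot)}f(x)=\int_{|x-y|<\delta}\frac{f(y)}{|x-y|^{n-\beta(x)}}\,\mathrm{d}y+\int_{|x-y|\ge\delta}\frac{f(y)}{|x-y|^{n-\beta(x)}}\,\mathrm{d}y=:\mathrm{I}(x)+\mathrm{II}(x).$$
Decomposing the near part $\mathrm{I}(x)$ over the dyadic annuli $\{2^{-j-1}\delta\le|x-y|<2^{-j}\delta\}$ and invoking the definition of $M$ gives the classical bound $|\mathrm{I}(x)|\le C\delta^{\beta(x)}Mf(x)$; note that only the value $\beta(x)$ at the base point enters the kernel, so no continuity of $\beta(\cdot)$ in the integration variable is required. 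For the far part $\mathrm{II}(x)$ I would apply the generalized H\"older inequality in variable Lebesgue spaces to get $|\mathrm{II}(x)|\le C\big\||x-\cdot|^{-(n-\beta(x))}\chi_{\{|x-\cdot|\ge\delta\}}\big\|_{L^{p'(\cdot)}(\R^{n})}$. Here the hypotheses (\ref{equ.2.5}) are decisive: since $\esssup_{x\in\R^{n}}p(x)\beta(x)<n$ and $\esssup_{x\in\R^{n}}p(\infty)\beta(x)<n$, the exponent $n-\beta(x)$ exceeds $n/p(y)$ and $n/p(\infty)$ uniformly, so the tail kernel lies in $L^{p'(\cdot)}$ and its norm decays like $\delta^{\beta(x)-n/p(x)}$ up to the distortion produced by the variation of $p$ near infinity. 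Balancing $\mathrm{I}$ against $\mathrm{II}$ by choosing $\delta$ with $\delta^{n/p(x)}Mf(x)\sim 1$ then yields a Hedberg-type pointwise inequality of the form $|I_{\beta(\cdot)}f(x)|\le C(1+|x|)^{\gamma(x)}(Mf(x))^{p(x)/q(x)}$.

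With this inequality in hand the modular estimate above collapses: multiplying by $(1+|x|)^{-\gamma(x)}$, raising to the power $q(x)$, and using $\tfrac{p(x)}{q(x)}\cdot q(x)=p(x)$ reduces everything to $\int_{\R^{n}}(Mf(x))^{p(x)}\,\mathrm{d}x\le C$, which is the boundedness of $M$ on $L^{p(\cdot)}(\R^{n})$; this holds because $p(\cdot)\in\mathscr{C}^{\log}(\R^{n})\cap\mathscr{P}(\R^{n})$ forces $p(\cdot)\in\mathscr{B}(\R^{n})$ by Proposition \ref{Pro.2.1}(I). The main obstacle is the far-field analysis at infinity: because $p$ is only log-H\"older, and not constant, near infinity, the exponent in the tail estimate drifts from $p(x)$ to $p(\infty)$, and one must track this drift quantitatively. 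Feeding the bound $|p(x)-p(\infty)|\le C_{\infty}/\ln(e+|x|)$ from (\ref{equ.2.3}) into the computation of the $L^{p'(\cdot)}$-norm of the kernel produces a factor comparable to $(e+|x|)^{C_{\infty}\beta(x)(1-\beta(x)/n)}$, which is exactly the weight $(1+|x|)^{\gamma(x)}$ with $\gamma(x)=C_{\infty}\beta(x)(1-\beta(x)/n)\le\frac{n}{4}C_{\infty}$ as in (\ref{equ.2.7}). Finally, the Sobolev relation $1/q=1/p-\beta/n$ together with the assumptions (\ref{equ.2.5}) on $\beta$ guarantees that $q(\cdot)$ is an admissible exponent in $\mathscr{P}(\R^{n})$ (indeed $q(x)>p(x)\ge p(\infty)>1$ and $q_{+}<\infty$), so that the target space is well defined; this verification is routine.
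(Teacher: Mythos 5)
The paper never proves Proposition \ref{pro.2.2}: it is imported verbatim from Kokilashvili and Samko \cite{KS}, so there is no in-paper argument to compare you against. Judged on its own terms, your sketch reconstructs the correct road map, and indeed the one followed in \cite{KS}: a Hedberg-type splitting of the potential at a radius $\delta$ with $\delta^{n/p(x)}Mf(x)\sim 1$, the near part bounded by $C\delta^{\beta(x)}Mf(x)$ (correctly noting that only $\beta(x)$ at the base point enters the kernel, and with the dyadic sum converging because $\beta_{0}>0$ in (\ref{equ.2.5})), the far part bounded via the generalized H\"older inequality by the $L^{p'(\cdot)}$-norm of the truncated kernel, and the final modular estimate reduced to the boundedness of $M$ on $L^{p(\cdot)}(\R^{n})$, which is available from $p(\cdot)\in\mathscr{C}^{\log}(\R^{n})\cap\mathscr{P}(\R^{n})$ and Proposition \ref{Pro.2.1}. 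The reduction to a modular estimate and the verification that $q(\cdot)\in\mathscr{P}(\R^{n})$ with $q_{+}<\infty$ are also fine.

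The genuine gap sits exactly where the proposition has its content: the claim that the $L^{p'(\cdot)}$-norm of $|x-\cdot|^{-(n-\beta(x))}\chi_{\{|x-\cdot|\ge\delta\}}$ behaves like $\delta^{\beta(x)-n/p(x)}$ ``up to a factor comparable to $(e+|x|)^{C_{\infty}\beta(x)(1-\beta(x)/n)}$'' is asserted, not derived, and it \emph{is} the theorem. To close it one must estimate the modular $\int_{|x-y|\ge\delta}|x-y|^{-(n-\beta(x))p'(y)}\,\mathrm{d}y$ separately for $\delta\le 1$ (where local log-H\"older continuity replaces $p'(y)$ by $p'(x)$) and for $\delta>1$ (where (\ref{equ.2.3}) replaces $p'(y)$ by $p'(\infty)$ at a quantified logarithmic cost), then check that after substituting $\delta=(Mf(x))^{-p(x)/n}$ the exponent discrepancy coming from $1/p(\infty)-1/p(x)$ has the favourable sign precisely because of the standing hypothesis $p(\infty)\le p(x)$ in (\ref{equ.2.6}), the irreducible remainder being exactly $\gamma(x)$ of (\ref{equ.2.7}). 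None of that bookkeeping appears. There is also a small but real slip in the integrability discussion: convergence of the truncated kernel's modular at infinity requires $(n-\beta(x))\,p'(\infty)>n$, i.e. $\beta(x)<n/p(\infty)$, which is the third line of (\ref{equ.2.5}); your stated threshold ``$n-\beta(x)>n/p(\infty)$'' amounts to $\beta(x)<n/p'(\infty)$ and is not the condition needed (though the hypothesis you actually hold is the correct one). In short: right strategy, consistent with the cited source, but the decisive far-field computation --- the only non-routine step --- is missing, so as written the argument proves the proposition only modulo \cite{KS} itself.
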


\begin{remark}\quad \label{rem.3}
(i)\ \ If $\beta(x)$ satisfies the condition of type (\ref{equ.2.3}): $|\beta(x)-\beta_{\infty}| \le \frac{C_{\infty}}{\ln(e+|x|)}$  $(x \in \R^{n})$,  then the weight   $(1+|x|)^{-\gamma(x)}$ is equivalent to the weight $(1+|x|)^{-\gamma_{\infty}}$.

(ii)\ \ One can also treat operator (\ref{equ.1.1}) with $\beta(x)$ replaced by $\beta(y)$. In the
case of potentials over bounded domains $\Omega$ such potentials differ unessentially, if the function  $\beta(x)$ satisfies the smoothness logarithmic condition as (\ref{equ.2.2}), since
$$C_{1}|x-y|^{n-\beta(y)}\le |x-y|^{n-\beta(x)}\le C_{2}|x-y|^{n-\beta(y)}$$
in this case ( see \cite{S}, p. 277).

(iii)\ \   When $p(\cdot)\in \mathscr{P}(\R^{n})$,  the assumption that $p(\cdot)\in \mathscr{C}^{\log}(\R^{n})$ is equivalent to assuming  $1/p(\cdot)\in \mathscr{C}^{\log}(\R^{n})$, since
$$\Big|\frac{p(x)-p(y)}{(p_{+})^{2}}\Big| \le \Big|\frac{1}{p(x)}-\frac{1}{p(y)} \Big|=\Big|\frac{p(x)-p(y)}{p(x)p(y)}\Big|\le \Big|\frac{p(x)-p(y)}{(p_{-})^{2}}\Big|.$$
And further, $1/p(\cdot)\in \mathscr{C}^{\log}(\R^{n})$ implies that $1/q(\cdot)\in \mathscr{C}^{\log}(\R^{n})$ as well.
\end{remark}

The next proposition is the generalization of variable exponents Herz spaces in \cite{AD}, and it was used in \cite{LZ}.

\begin{proposition}\ \ \label{pro.2.3}
Let  $q(\cdot)\in \mathscr{P}(\R^{n}), p\in (0,\infty)$, and $\lambda\in [0,\infty)$. If $\alpha(\cdot)\in L^{\infty}(\mathbb{R}^{n})\cap \mathscr{C}^{\log}_{0}(\R^{n})\cap \mathscr{C}^{\log}_{\infty}(\R^{n})$, then
 \begin{equation*}   
  \begin{split}
   \|f\|_{M\dot{K}_{p, q(\cdot)}^{\alpha(\cdot), \lambda}(\mathbb{R}^{n})}&=\sup_{k_{0}\in \mathbb{Z}}2^{-k_{0}\lambda} \Big(\sum_{k=-\infty}^{k_{0}}\|2^{k\alpha(\cdot)}f\chi_{_{\scriptstyle k}}\|_{L^{^{q(\cdot)}}(\mathbb{R}^{n})}^{p} \Big)^{\frac{1}{p}}  \\
   &\approx \max \bigg\{
   \sup_{k_{0}< 0 \atop k_{0}\in \mathbb{Z}} 2^{-k_{0}\lambda} \Big(\sum_{k=-\infty}^{k_{0}} 2^{k\alpha(0)p}\|f\chi_{_{\scriptstyle k}}\|_{L^{^{q(\cdot)}}(\mathbb{R}^{n})}^{p} \Big)^{\frac{1}{p}}, \\
   &{\hskip 4em} \sup_{k_{0}\ge 0 \atop k_{0}\in \mathbb{Z}} \bigg( 2^{-k_{0}\lambda} \Big(\sum_{k=-\infty}^{-1} 2^{k\alpha(0)p}\|f\chi_{_{\scriptstyle k}}\|_{L^{^{q(\cdot)}}(\mathbb{R}^{n})}^{p} \Big)^{\frac{1}{p}}           \\
    &{\hskip 6em} +  2^{-k_{0}\lambda} \Big(\sum_{k=0}^{k_{0}} 2^{k\alpha_{\infty} p}\|f\chi_{_{\scriptstyle k}}\|_{L^{^{q(\cdot)}}(\mathbb{R}^{n})}^{p} \Big)^{\frac{1}{p}}
   \bigg)
    \bigg\}.
   \end{split}
\end{equation*}

\end{proposition}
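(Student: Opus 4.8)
The plan is to reduce the variable power $2^{k\alpha(\cdot)}$ on each annulus $A_{k}$ to a constant power, namely $2^{k\alpha(0)}$ for $k<0$ and $2^{k\alpha_{\infty}}$ for $k\ge 0$, and then to decompose the defining supremum according to the sign of $k_{0}$. The decisive ingredient is the following uniform pointwise comparability. For $k<0$ and $x\in A_{k}$ one has $|x|\le 2^{k}$, so $\ln(e+1/|x|)\ge |k|\ln 2$, and the origin log-H\"older condition \eqref{equ.2.2} gives $|k|\,|\alpha(x)-\alpha(0)|\le C/\ln 2$; hence the exponent $k(\alpha(x)-\alpha(0))$ stays bounded and $2^{k\alpha(x)}\sim 2^{k\alpha(0)}$ with constants independent of $k$ and $x$. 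Symmetrically, for $k\ge 0$ and $x\in A_{k}$ one has $|x|>2^{k-1}$, so the infinity condition \eqref{equ.2.3} yields $|k|\,|\alpha(x)-\alpha_{\infty}|\le C_{\infty}'$ and therefore $2^{k\alpha(x)}\sim 2^{k\alpha_{\infty}}$ uniformly (the case $k=0$ being trivial, since $2^{0\cdot\alpha(x)}=1$). Here the hypothesis $\alpha(\cdot)\in L^{\infty}(\R^{n})$ guarantees that all quantities involved are finite.

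Second, I would transfer these pointwise estimates to the Lebesgue norms. Since $2^{k\alpha(\cdot)}$ is comparable to a constant on the support $A_{k}$ of $f\chi_{_{\scriptstyle k}}$, the lattice property of the Banach function space $L^{q(\cdot)}(\R^{n})$ gives $\|2^{k\alpha(\cdot)}f\chi_{_{\scriptstyle k}}\|_{L^{q(\cdot)}}\sim 2^{k\alpha(0)}\|f\chi_{_{\scriptstyle k}}\|_{L^{q(\cdot)}}$ for $k<0$ and $\|2^{k\alpha(\cdot)}f\chi_{_{\scriptstyle k}}\|_{L^{q(\cdot)}}\sim 2^{k\alpha_{\infty}}\|f\chi_{_{\scriptstyle k}}\|_{L^{q(\cdot)}}$ for $k\ge 0$, with the same uniform constants. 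Substituting these into the inner $\ell^{p}$ sum, the case $k_{0}<0$ immediately produces the first term in the asserted maximum. For $k_{0}\ge 0$ I would split $\sum_{k=-\infty}^{k_{0}}=\sum_{k=-\infty}^{-1}+\sum_{k=0}^{k_{0}}$ and apply the elementary equivalence $(a+b)^{1/p}\sim a^{1/p}+b^{1/p}$ (constants depending only on $p$) to separate the origin part, carrying the weight $2^{k\alpha(0)}$, from the infinity part, carrying $2^{k\alpha_{\infty}}$. Finally, writing $\sup_{k_{0}\in\mathbb{Z}}=\max\{\sup_{k_{0}<0},\,\sup_{k_{0}\ge 0}\}$ assembles the two suprema into the claimed maximum.

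The only genuinely delicate point is the first step: one must verify that the oscillation of $\alpha(\cdot)$ on $A_{k}$, which decays like $1/|k|$ by the log-H\"older hypotheses of Definition \ref{Def.2.3}, exactly compensates the linear growth of the exponent $k$, so that the resulting comparability constants are \emph{independent} of $k$. This uniformity across the whole infinite range of scales is precisely what legitimizes the passage to the constant exponents $\alpha(0)$ and $\alpha_{\infty}$; everything afterwards is bookkeeping together with the $\ell^{p}$ triangle-type inequality. I expect no difficulty from the variable exponent $q(\cdot)$ here, since it enters only through the monotonicity of its norm and is untouched by the replacement of $\alpha(\cdot)$ by a constant on each annulus.
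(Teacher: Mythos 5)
Your argument is correct and is essentially the standard one: the paper itself gives no proof of Proposition \ref{pro.2.3}, deferring to the cited references, and the argument there is exactly your reduction --- the log-H\"older conditions at the origin and at infinity make the oscillation of $\alpha(\cdot)$ on $A_{k}$ decay like $1/|k|$, so $2^{k\alpha(\cdot)}\sim 2^{k\alpha(0)}$ (resp.\ $2^{k\alpha_{\infty}}$) uniformly on $A_{k}$ for $k<0$ (resp.\ $k\ge 0$), after which the lattice property of $L^{q(\cdot)}(\R^{n})$ and the splitting of the supremum over $k_{0}$ finish the proof. No gaps.
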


The next lemma is known as  the generalized H\"{o}lder's inequality on Lebesgue spaces with
variable exponent, and  the proof can be found in \cite{KR}.

\begin{lemma}\ (generalized H\"{o}lder's inequality) \label{Lem.1}
 \ \ Suppose that $ q(\cdot)\in \mathscr{P}(\R^{n})$, then for any $f\in L^{q(\cdot)}(\R^{n})$ and any $g\in L^{q'(\cdot)}(\R^{n})$, we have
 \begin{equation*}
 \int_{\R^{n}} |f(x)g(x)|\mathrm{d}x \le C_{q}\|f\|_{L^{q(\cdot)}(\R^{n})} \|g\|_{L^{q'(\cdot)}(\R^{n})},
\end{equation*}
where $C_{q}=1+1/q_{-}-1/q_{+}$.

\end{lemma}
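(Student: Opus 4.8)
The plan is to derive the inequality from the scalar Young inequality applied pointwise with the variable exponent $q(x)$, together with the unit-ball property of the Luxemburg norm from Definition~\ref{def.2.1}. First I would dispose of the degenerate cases: if either $\|f\|_{L^{q(\cdot)}(\R^{n})}$ or $\|g\|_{L^{q'(\cdot)}(\R^{n})}$ is zero or infinite the assertion is trivial, so I may assume both are finite and strictly positive and, by homogeneity, pass to the normalized functions $\tilde f=f/\|f\|_{L^{q(\cdot)}(\R^{n})}$ and $\tilde g=g/\|g\|_{L^{q'(\cdot)}(\R^{n})}$. For almost every $x$, Young's inequality with the conjugate pair $q(x)$, $q'(x)$ gives
\begin{equation*}
|\tilde f(x)\tilde g(x)|\le \frac{|\tilde f(x)|^{q(x)}}{q(x)}+\frac{|\tilde g(x)|^{q'(x)}}{q'(x)}.
\end{equation*}

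Since $q(\cdot)\in\mathscr{P}(\R^{n})$ guarantees $q_{+}<\infty$, the modular $F_{q}$ of Definition~\ref{def.2.1} obeys the unit-ball property, namely $\int_{\R^{n}}|\tilde f(x)|^{q(x)}\,\mathrm{d}x\le 1$ and $\int_{\R^{n}}|\tilde g(x)|^{q'(x)}\,\mathrm{d}x\le 1$; I would justify this by letting $\eta$ decrease to $\|f\|_{L^{q(\cdot)}(\R^{n})}$ in the defining inequality $F_{q}(f/\eta)\le 1$ and invoking monotone convergence, and similarly for $g$. Integrating the pointwise bound over $\R^{n}$ and using $1/q(x)\le 1/q_{-}$ and $1/q'(x)\le 1/q'_{-}$ together with these two modular estimates, I obtain
\begin{equation*}
\int_{\R^{n}}|\tilde f(x)\tilde g(x)|\,\mathrm{d}x\le \frac{1}{q_{-}}+\frac{1}{q'_{-}}.
\end{equation*}

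Finally I would invoke the identity $q'_{-}=q_{+}/(q_{+}-1)$ recorded in Definition~\ref{Def.2.2}(I), which rewrites $1/q'_{-}$ as $1-1/q_{+}$, so that the right-hand side equals $1+1/q_{-}-1/q_{+}=C_{q}$; multiplying through by $\|f\|_{L^{q(\cdot)}(\R^{n})}\|g\|_{L^{q'(\cdot)}(\R^{n})}$ then restores the general case with exactly the stated constant. The proof is short, and the only points demanding care are the unit-ball property and the constant bookkeeping: one must bound $1/q(x)$ and $1/q'(x)$ by $1/q_{-}$ and $1/q'_{-}$ rather than crudely, and then use the relation between $q'_{-}$ and $q_{+}$ so that $1/q_{-}+1/q'_{-}$ collapses precisely to $C_{q}=1+1/q_{-}-1/q_{+}$.
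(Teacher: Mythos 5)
Your proof is correct, and it is essentially the standard argument (pointwise Young's inequality plus the unit-ball property of the Luxemburg norm) found in the reference \cite{KR} that the paper cites for this lemma; the paper itself gives no proof. The constant bookkeeping via $1/q(x)\le 1/q_{-}$, $1/q'(x)\le 1/q'_{-}$ and $1/q'_{-}=1-1/q_{+}$ yields exactly $C_{q}=1+1/q_{-}-1/q_{+}$, so nothing is missing.
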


The following  lemma can be found in \cite{I}.

\begin{lemma}\ \ \label{Lem.2}
Let $ q(\cdot)\in \mathscr{B}(\R^{n})$.
\begin{list}{}{}
\item[(I)]\ \ Then there exist  positive constants $\delta\in (0,1)$ and $C>0$ such that
\begin{equation*}
 \frac{\|\chi_{S}\|_{L^{q(\cdot)}(\R^{n})}} {\|\chi_{B}\|_{L^{q(\cdot)}(\R^{n})}} \le C \bigg(\frac{|S|}{|B|} \bigg)^{\delta}
\end{equation*}
 for all balls $B$ in $\R^{n}$ and all measurable subsets $S\subset B$.

 \item[(II)]\ \ Then there exists a positive constant  $C>0$ such that
 \begin{equation*}
 C^{-1}\le \frac{1}{|B|} \|\chi_{B}\|_{L^{q(\cdot)}(\R^{n})} \|\chi_{B}\|_{L^{q'(\cdot)}(\R^{n})} \le C
\end{equation*}
for all balls $B$ in $\R^{n}$.
 \end{list}
\end{lemma}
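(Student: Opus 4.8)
The plan is to prove the two assertions separately, using as tools the generalized H\"older inequality (Lemma~\ref{Lem.1}) together with the boundedness of the maximal operator $M$ on \emph{both} $L^{q(\cdot)}(\R^{n})$ and $L^{q'(\cdot)}(\R^{n})$; the second boundedness is legitimate because $q(\cdot)\in\mathscr{B}(\R^{n})$ forces $q'(\cdot)\in\mathscr{B}(\R^{n})$ by Proposition~\ref{Pro.2.1}(II). I would establish part (II) first and then bootstrap it into the self-improving estimate (I).

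For part (II), the lower bound is immediate: writing $|B|=\int_{\R^{n}}\chi_{B}\cdot\chi_{B}\,\mathrm{d}x$ and applying Lemma~\ref{Lem.1} gives $|B|\le C_{q}\|\chi_{B}\|_{L^{q(\cdot)}}\|\chi_{B}\|_{L^{q'(\cdot)}}$, i.e. the constant $C^{-1}$ on the left. For the upper bound I would introduce the averaging operators $T_{B}f:=\big(\tfrac{1}{|B|}\int_{B}f\big)\chi_{B}$. Since $|T_{B}f(x)|\le Mf(x)$ pointwise, the hypothesis $q(\cdot)\in\mathscr{B}(\R^{n})$ makes $\{T_{B}\}$ uniformly bounded on $L^{q(\cdot)}(\R^{n})$. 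Combining this with the associate-space duality $\|\chi_{B}\|_{L^{q'(\cdot)}}\sim\sup\{\int_{B}f:\ f\ge0,\ \|f\|_{L^{q(\cdot)}}\le1\}$: for each admissible $f$ one has $T_{B}f=c_{f}\chi_{B}$ with $c_{f}=\tfrac{1}{|B|}\int_{B}f$, so $c_{f}\|\chi_{B}\|_{L^{q(\cdot)}}=\|T_{B}f\|_{L^{q(\cdot)}}\le C$ and hence $\int_{B}f=|B|c_{f}\le C|B|/\|\chi_{B}\|_{L^{q(\cdot)}}$. Taking the supremum over $f$ yields $\|\chi_{B}\|_{L^{q'(\cdot)}}\le C|B|/\|\chi_{B}\|_{L^{q(\cdot)}}$, which is the desired upper bound.

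For part (I), the elementary maximal estimate only produces the wrong inequality: for $x\in B=B(x_{0},r)$ one has $B\subset B(x,2r)$, whence $M\chi_{S}(x)\ge 2^{-n}|S|/|B|$ on $B$, and taking $L^{q(\cdot)}$ norms gives the \emph{lower} bound $\|\chi_{S}\|_{L^{q(\cdot)}}\gtrsim(|S|/|B|)\|\chi_{B}\|_{L^{q(\cdot)}}$, which together with the trivial $\|\chi_{S}\|_{L^{q(\cdot)}}\le\|\chi_{B}\|_{L^{q(\cdot)}}$ only brackets the ratio between $c\,|S|/|B|$ and $1$. To extract the genuine power $\delta<1$ I would invoke the self-improving (open) property of $\mathscr{B}(\R^{n})$ due to Diening: since $q'(\cdot)\in\mathscr{B}(\R^{n})$, there is $s>1$ (with $s<q'_{-}$) such that $M$ is still bounded on $L^{q'(\cdot)/s}(\R^{n})$. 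Running the maximal estimate above at the exponent $q'(\cdot)/s$ gives $\|\chi_{S}\|_{L^{q'(\cdot)/s}}\gtrsim(|S|/|B|)\|\chi_{B}\|_{L^{q'(\cdot)/s}}$, and the homogeneity identity $\|\chi_{E}\|_{L^{q'(\cdot)/s}}=\|\chi_{E}\|_{L^{q'(\cdot)}}^{s}$ converts this into $\|\chi_{S}\|_{L^{q'(\cdot)}}\gtrsim(|S|/|B|)^{1/s}\|\chi_{B}\|_{L^{q'(\cdot)}}$. Feeding this lower bound back through part (II), namely $\|\chi_{E}\|_{L^{q(\cdot)}}\sim|E|/\|\chi_{E}\|_{L^{q'(\cdot)}}$ for $E\in\{S,B\}$, gives $\|\chi_{S}\|_{L^{q(\cdot)}}/\|\chi_{B}\|_{L^{q(\cdot)}}\lesssim(|S|/|B|)^{1-1/s}$, i.e. the claim with $\delta=1-1/s\in(0,1)$.

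I expect the decisive step to be precisely this self-improvement. The two maximal-function estimates and the averaging/duality argument are elementary, but they only yield the trivial bracketing of the ratio; upgrading the exponent from $1$ to some $\delta<1$ cannot be achieved by H\"older or by the averaging operators alone and rests on Diening's theorem that membership in $\mathscr{B}(\R^{n})$ persists under a small decrease of the exponent. Once $s>1$ is produced, the homogeneity identity and part (II) turn the lower bound at the perturbed exponent into the sought power-type upper bound, and the remaining constant-tracking is routine.
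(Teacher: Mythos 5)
The paper offers no proof of this lemma at all --- it is quoted from Izuki \cite{I} --- so your attempt has to be judged against the standard argument there. Your part (II) is correct and is essentially that standard argument: H\"older's inequality for the lower bound, and the uniform domination $|T_{B}f|\le CMf$ of the averaging operators together with the norm conjugate formula for the upper bound. Your diagnosis of part (I) is also right as far as it goes: the elementary maximal estimate only yields the reverse inequality, the decisive extra input is Diening's left-openness of $\mathscr{B}(\R^{n})$ (which, note, is not among the facts recorded in the paper's Proposition \ref{Pro.2.1}), and your intermediate bound $\|\chi_{S}\|_{L^{q'(\cdot)}}\ge c\,(|S|/|B|)^{1/s}\|\chi_{B}\|_{L^{q'(\cdot)}}$ is correctly derived. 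The gap is in the very last conversion, where you write ``$\|\chi_{E}\|_{L^{q(\cdot)}}\sim|E|/\|\chi_{E}\|_{L^{q'(\cdot)}}$ for $E\in\{S,B\}$''. Part (II) supplies this equivalence only for \emph{balls}; for $E=S$ you are tacitly using the upper bound $\|\chi_{S}\|_{L^{q(\cdot)}}\|\chi_{S}\|_{L^{q'(\cdot)}}\le C|S|$ for an arbitrary measurable set, which neither follows from (II) nor from the averaging-operator argument (for a general $S\subset B$ one only gets $|T_{S}f|\le C\frac{|B|}{|S|}Mf$, and the constant blows up).

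Worse, that auxiliary inequality is simply false for non-constant exponents, even log-H\"older ones. Take $q(\cdot)\equiv 2$ near a point $x_{0}$ and $q(\cdot)\equiv 3$ near a point $x_{1}$, and let $S=S_{0}\cup S_{1}$ with $S_{i}$ a small set near $x_{i}$ and $|S_{0}|=|S_{1}|=m$. Solving the modular equations one finds, as $m\to 0$, that $\|\chi_{S}\|_{L^{q(\cdot)}}\sim m^{1/3}$ (the larger value of $q$ dominates) while $\|\chi_{S}\|_{L^{q'(\cdot)}}\sim m^{1/2}$ (the larger value of $q'$, namely $2$, dominates), so the product is $\sim m^{5/6}$, whereas $|S|=2m$; the ratio $m^{-1/6}$ is unbounded. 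So the argument cannot be repaired by tightening constants: you must avoid ever estimating $\|\chi_{S}\|_{L^{q(\cdot)}}$ through $\|\chi_{S}\|_{L^{q'(\cdot)}}$. The standard fix keeps your two key tools (left-openness and part (II) for the ball) but reorganizes the proof: write $\|\chi_{S}\|_{L^{q(\cdot)}}\le C\sup\{\int_{S}|g|:\|g\|_{L^{q'(\cdot)}}\le 1\}$, apply the classical H\"older inequality with the constant exponent $t=s$ to get $\int_{S}|g|\le|S|^{1-1/t}\bigl(\int_{B}|g|^{t}\bigr)^{1/t}=(|S|/|B|)^{1-1/t}\,|B|\,\bigl(\tfrac{1}{|B|}\int_{B}|g|^{t}\bigr)^{1/t}$, bound the last factor by $\inf_{x\in B}\bigl(M(|g|^{t})(x)\bigr)^{1/t}\le C/\|\chi_{B}\|_{L^{q'(\cdot)}}$ using the boundedness of $M$ on $L^{q'(\cdot)/t}(\R^{n})$, and finish with part (II) for the ball $B$; this yields the claim with $\delta=1-1/t$.
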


\begin{remark}\quad
(i)\ \  If $q_{_{1}}(\cdot),~q_{_{2}}(\cdot)\in \mathscr{C}^{\log}(\R^{n}) \cap \mathscr{P}(\R^{n})$, then we see that $ q'_{_{1}}(\cdot),~q_{_{2}}(\cdot) \in \mathscr{B}(\R^{n})$. Hence we can take positive constants $0<\delta_{1}<1/(q'_{_{1}})_{+},~0<\delta_{2}<1/(q_{_{2}})_{+}$  such that
\begin{equation}  \label{equ.2.9}
 \frac{\|\chi_{S}\|_{L^{q'_{1}(\cdot)}(\R^{n})}} {\|\chi_{B}\|_{L^{q'_{1}(\cdot)}(\R^{n})}} \le C \bigg(\frac{|S|}{|B|} \bigg)^{\delta_{1}},
 \ \ \  \frac{\|\chi_{S}\|_{L^{q_{2}(\cdot)}(\R^{n})}} {\|\chi_{B}\|_{L^{q_{2}(\cdot)}(\R^{n})}} \le C \bigg(\frac{|S|}{|B|} \bigg)^{\delta_{2}}
\end{equation}
 hold for all balls $B$ in $\R^{n}$ and all measurable subsets $S\subset B$ ( see \cite{WZ,I}).

(ii)\ \ On the other hand, Kopaliani\upcite{K} has proved the conclusion: If the exponent $q(\cdot)\in \mathscr{P}(\R^{n})$  equals to a constant outside some large ball, then $q(\cdot)\in \mathscr{B}(\R^{n})$ if and only if  $q(\cdot)$ satisfies the Muckenhoupt type condition
$$\sup_{Q:\hbox{cube}} \frac{1}{|Q|}\|\chi_{_{Q}}\|_{L^{q(\cdot)}(\R^{n})} \|\chi_{_{Q}}\|_{L^{q'(\cdot)}(\R^{n})} <\infty.$$
\end{remark}

\section{Main results and their proofs}

 Our main result can be stated as follows (some details see \cite{W1}).

\begin{theorem}\quad\label{thm.1}
Suppose that $q_{_{1}}(\cdot)\in \mathscr{C}^{\log}(\R^{n}) \cap\mathscr{P}(\R^{n})$ satisfies condition (\ref{equ.2.6}), and $\beta(x)$ meet condition (\ref{equ.2.5}) which $p(\cdot)$ is replaced by $q_{_{1}}(\cdot)$. Define the variable exponent $q_{_{2}}(\cdot)$ by
$$\frac{1}{q_{_{2}}(x)}=\frac{1}{q_{_{1}}(x)}-\frac{\beta(x)}{n}.$$
Let $ 0<p_{_{1}}\le {p_{_{2}}} <\infty,~ \lambda\ge 0$, and $\alpha(\cdot)\in L^{\infty}(\mathbb{R}^{n})$ be log-H\"{o}lder continuous both at the origin and at infinity,with
$ \alpha(0)\le\alpha_{\infty}< \lambda+n\delta_{1}$, where $\delta_{1}\in (0, 1/(q'_{1})_{+})$ is the constant appearing in (\ref{equ.2.9}).
Then  $$\Big\|(1+|x|)^{-\gamma(x)}\mathscr{H}_{\beta(\cdot)}(f) \Big\|_{M\dot{K}_{p_{_{2}},q_{_{2}}(\cdot)}^{\alpha(\cdot),\lambda}(\R^{n})}\le C\|f\|_{M\dot{K}_{p_{_{1}},q_{_{1}}(\cdot)}^{\alpha(\cdot),\lambda}(\R^{n})},$$
where $\gamma(x)$ is defined as in (\ref{equ.2.7}), and $C_{\infty}$ is the Dini-Lipschitz constant from (\ref{equ.2.2}) which $q_{_{1}}(\cdot)$ instead of $q(\cdot)$.

\end{theorem}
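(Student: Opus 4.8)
The plan is to run the standard dyadic-annulus argument for Herz--Morrey spaces, reducing the whole estimate to one per-annulus inequality and then summing a geometric series whose convergence is dictated by the hypothesis $\alpha(0)\le\alpha_\infty<\lambda+n\delta_1$; the one genuinely new point, caused by the \emph{variable} order $\beta(x)$, will be pushed into a single local estimate handled by Proposition \ref{pro.2.2}. Write $g:=(1+|x|)^{-\gamma(x)}\mathscr{H}_{\beta(\cdot)}(f)$. By Proposition \ref{pro.2.3} the norm $\|g\|_{M\dot{K}^{\alpha(\cdot),\lambda}_{p_2,q_2(\cdot)}}$ is comparable to the maximum of an ``origin'' quantity (sums over $k<0$ with weight $2^{k\alpha(0)}$) and an ``infinity'' quantity (sums over $k\ge 0$ with weight $2^{k\alpha_\infty}$), all tied together by the Morrey supremum $\sup_{k_0}2^{-k_0\lambda}$; it therefore suffices to control $\|g\chi_{k}\|_{L^{q_2(\cdot)}}$ for each $k$ and feed the result into these two sums.

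For the per-annulus bound, fix $x\in A_k$. Since $\{|t|<|x|\}\subset B_k=\bigcup_{j\le k}A_j$, splitting the defining integral and applying the generalized H\"{o}lder inequality (Lemma \ref{Lem.1}) on each $A_j$ gives $\int_{A_j}|f|\le C\|f\chi_j\|_{L^{q_1(\cdot)}}\|\chi_j\|_{L^{q_1'(\cdot)}}$, whence
\begin{equation*}
\|g\chi_k\|_{L^{q_2(\cdot)}}\le C\,N_k\sum_{j\le k}\|f\chi_j\|_{L^{q_1(\cdot)}}\|\chi_j\|_{L^{q_1'(\cdot)}},\qquad N_k:=\Big\|(1+|\cdot|)^{-\gamma(\cdot)}\,|\cdot|^{-(n-\beta(\cdot))}\chi_k\Big\|_{L^{q_2(\cdot)}}.
\end{equation*}
Because $A_j\subset B_k$, Lemma \ref{Lem.2}(I) for the exponent $q_1'(\cdot)$ (see (\ref{equ.2.9})) yields $\|\chi_j\|_{L^{q_1'(\cdot)}}\le C\,2^{(j-k)n\delta_1}\|\chi_k\|_{L^{q_1'(\cdot)}}$, supplying the crucial geometric factor $2^{(j-k)n\delta_1}$ for $j\le k$, so that $\|g\chi_k\|_{L^{q_2(\cdot)}}\le C\,\big(N_k\|\chi_k\|_{L^{q_1'(\cdot)}}\big)\sum_{j\le k}2^{(j-k)n\delta_1}\|f\chi_j\|_{L^{q_1(\cdot)}}$.

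The decisive step, and the one I expect to be the main obstacle, is to prove that $N_k\|\chi_k\|_{L^{q_1'(\cdot)}}\le C$ uniformly in $k$. On $A_k$ one has $|x|^{-(n-\beta(x))}\sim 2^{-kn}|x|^{\beta(x)}$, so it is enough to show the weighted bound $\big\|(1+|\cdot|)^{-\gamma(\cdot)}|\cdot|^{\beta(\cdot)}\chi_k\big\|_{L^{q_2(\cdot)}}\le C\|\chi_k\|_{L^{q_1(\cdot)}}$ and then invoke $\|\chi_k\|_{L^{q_1(\cdot)}}\|\chi_k\|_{L^{q_1'(\cdot)}}\sim 2^{kn}$ from Lemma \ref{Lem.2}(II). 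This is exactly the place where $\beta(x)$ being merely measurable forces the weight to do real work: since $I_{\beta(\cdot)}(\chi_{\widetilde A_k})(x)\ge c\,|x|^{\beta(x)}$ on $A_k$ for the fattened shell $\widetilde A_k=A_{k-1}\cup A_k\cup A_{k+1}$ (every $x\in A_k$ sees a ball of radius $\sim 2^k$ inside $\widetilde A_k$), the pointwise domination $|x|^{\beta(x)}\chi_k\le C\,I_{\beta(\cdot)}(\chi_{\widetilde A_k})$ together with the weighted Sobolev estimate of Proposition \ref{pro.2.2} (whose weight $\gamma(x)=C_\infty\beta(x)(1-\beta(x)/n)$ is precisely (\ref{equ.2.7})) gives $\big\|(1+|\cdot|)^{-\gamma(\cdot)}I_{\beta(\cdot)}(\chi_{\widetilde A_k})\big\|_{L^{q_2(\cdot)}}\le C\|\chi_{\widetilde A_k}\|_{L^{q_1(\cdot)}}\sim \|\chi_k\|_{L^{q_1(\cdot)}}$, as required. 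Intuitively the exponent count cancels by the Sobolev relation $\tfrac{1}{q_2}-\tfrac{1}{q_1}=-\tfrac{\beta}{n}$, but it is the weight that legitimises this when $\beta$ oscillates at infinity.

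Combining the previous estimates, $\|g\chi_k\|_{L^{q_2(\cdot)}}\le C\sum_{j\le k}2^{(j-k)n\delta_1}\|f\chi_j\|_{L^{q_1(\cdot)}}$. It remains to insert this into the origin and infinity sums of Proposition \ref{pro.2.3}. Setting $E_j:=2^{j\alpha(0)}\|f\chi_j\|_{L^{q_1(\cdot)}}$ (and its $\alpha_\infty$-analogue), the weighted left-hand term becomes $\sum_{j\le k}2^{-(k-j)(n\delta_1-\alpha(0))}E_j$; using $p_1\le p_2$ to pass from $\ell^{p_2}$ to $\ell^{p_1}$ and the Morrey control of the partial sums $\big(\sum_{j\le m}E_j^{p_1}\big)^{1/p_1}\le 2^{m\lambda}\|f\|_{M\dot{K}^{\alpha(\cdot),\lambda}_{p_1,q_1(\cdot)}}$, one sums the double series; both the regime $\alpha(0)<n\delta_1$ (direct geometric summation) and the regime $n\delta_1\le\alpha(0)<\lambda+n\delta_1$ (where the growth $2^{m\lambda}$ is reined in by $2^{-k_0\lambda}$) converge precisely under $\alpha(0)\le\alpha_\infty<\lambda+n\delta_1$. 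Reassembling the two contributions bounds the left-hand side by $C\|f\|_{M\dot{K}^{\alpha(\cdot),\lambda}_{p_1,q_1(\cdot)}(\R^n)}$, which is the claim.
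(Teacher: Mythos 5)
Your proposal is correct and follows essentially the same route as the paper's proof: H\"older's inequality on each annulus, the pointwise domination of $|x|^{\beta(x)}\chi_{_k}$ by a Riesz potential of a characteristic function so that the weighted Sobolev estimate of Proposition \ref{pro.2.2} can be invoked, the norm ratios from Lemma \ref{Lem.2} and (\ref{equ.2.9}) producing the factor $2^{(j-k)n\delta_{1}}$, and finally Proposition \ref{pro.2.3} together with the partial-sum bound $\|f_{j}\|_{L^{q_{1}(\cdot)}(\R^{n})}\le C2^{j(\lambda-\alpha(0))}\|f\|$ (resp.\ with $\alpha_{\infty}$) and geometric summation under $\alpha(0)\le\alpha_{\infty}<\lambda+n\delta_{1}$. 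The only deviations are cosmetic: you test $I_{\beta(\cdot)}$ against the fattened shell $\widetilde A_{k}$ where the paper uses the ball $B_{k}$ (for which only the one-sided bound $\|\chi_{\widetilde A_{k}}\|_{L^{q_{1}(\cdot)}(\R^{n})}\|\chi_{_k}\|_{L^{q'_{1}(\cdot)}(\R^{n})}\le C2^{kn}$ is actually needed, and it does hold), and you apply Lemma \ref{Lem.2}(I) before rather than after Lemma \ref{Lem.2}(II).
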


\begin{proof}\quad
For any $f\in M\dot{K}_{p_{_{1}},q_{_{1}}(\cdot)}^{\alpha(\cdot),\lambda}(\R^{n})$, if we denote   $f_j:=f\cdot\chi_{j}=f\cdot\chi_{A_j}$ for each $j\in \mathbb{Z}$, then we can write
$$f(x)=\sum_{j=-\infty}^{\infty}f(x)\cdot\chi_{j}(x) =\sum_{j=-\infty}^{\infty}f_{j}(x).
$$

By (\ref{equ.1.a}) and Lemma \ref{Lem.1}, we have
\begin{equation} \label{equ.3.1}
\begin{split}
|\mathscr{H}_{\beta(\cdot)}(f)(x)\cdot\chi_{_{\scriptstyle k}}(x)| & \le   \frac{1}{|x|^{n-\beta(x)}} \int_{B_{k}} |f(t)|\mathrm{d}t \cdot\chi_{_{\scriptstyle k}}(x)   \\
& \le C2^{^{-kn}} \sum_{j=-\infty}^{k} \|f_{j}\|_{L^{^{q_{_{1}}(\cdot)}}(\R^{n})} \|\chi_{_{\scriptstyle{j}}}\|_{L^{^{q'_{_{1}}(\cdot)}}(\R^{n})}  \cdot|x|^{\beta(x)}\chi_{_{\scriptstyle{k}}}(x).
\end{split}
\end{equation}

For Proposition \ref{pro.2.2}, we note that
\begin{equation} \label{equ.3.2}
\begin{split}
 I_{\beta(\cdot)}(\chi_{_{B_{k}}})(x)  &\ge I_{\beta(\cdot)}(\chi_{_{B_{k}}})(x)\cdot\chi_{_{B_{k}}}(x)    =   \int_{B_{k}}\frac{1}{|x-y|^{n-\beta(x)}} \mathrm{d}y  \cdot\chi_{_{B_{k}}}(x) \\
& \ge C|x|^{\beta(x)} \cdot\chi_{_{B_{k}}}(x)  \ge C|x|^{\beta(x)} \cdot\chi_{_{k}}(x).
\end{split}
\end{equation}

Using Proposition \ref{pro.2.2}, Lemma \ref{Lem.2},  (\ref{equ.2.9}),  (\ref{equ.3.1})  and (\ref{equ.3.2}), we have
\begin{equation} \label{equ.3.3}
\begin{split}
& \Big\|(1+|x|)^{-\gamma(x)}\mathscr{H}_{\beta(\cdot)}(f)\cdot\chi_{_{\scriptstyle k}}(\cdot)\Big\|_{L^{^{q_{_{2}}(\cdot)}}(\R^{n})} \\
  & \le C2^{^{-kn}} \sum_{j=-\infty}^{k} \|f_{j}\|_{L^{^{q_{_{1}}(\cdot)}}(\R^{n})} \|\chi_{_{\scriptstyle{j}}}\|_{L^{^{q'_{_{1}}(\cdot)}}(\R^{n})}  \Big\|(1+|x|)^{-\gamma(x)}I_{\beta(\cdot)}(\chi_{_{B_{k}}})\Big\|_{L^{^{q_{_{2}}(\cdot)}}(\R^{n})}   \\
& \le C2^{^{-kn}} \sum_{j=-\infty}^{k}\|f_{j}\|_{L^{^{q_{_{1}}(\cdot)}}(\R^{n})} \|\chi_{_{\scriptstyle{j}}}\|_{L^{^{q'_{_{1}}(\cdot)}}(\R^{n})}  \|\chi_{_{B_{k}}}\|_{L^{^{q_{_{1}}(\cdot)}}(\R^{n})}   \\
 & \le C \sum_{j=-\infty}^{k}\|f_{j}\|_{L^{^{q_{_{1}}(\cdot)}}(\R^{n})} \frac{\|\chi_{_{B_{j}}}\|_{L^{^{q'_{_{1}}(\cdot)}}(\R^{n})}}{\|\chi_{_{B_{k}}}\|_{L^{^{q'_{_{1}}(\cdot)}}(\R^{n})}}    \le C\sum_{j=-\infty}^{k}2^{(j-k)n\delta_{1}} \|f_{j}\|_{L^{^{q_{_{1}}(\cdot)}}(\R^{n})}.
\end{split}
\end{equation}

Because of $0<p_{_{1}}/p_{_{2}}\le 1$,  applying  inequality
\begin{equation}     \label{equ.3.4}
\bigg(\sum_{i=-\infty}^{\infty}|a_{i}|\bigg)^{ p_{_{1}}/p_{_{2}}} \le \sum_{i=-\infty}^{\infty} |a_{i}|^{ p_{_{1}}/ p_{_{2}}},
\end{equation}
and   Proposition \ref{pro.2.3}, then we have
\begin{eqnarray*}
&&\; \Big\|(1+|x|)^{-\gamma(x)}\mathscr{H}_{\beta(\cdot)}(f) \Big\|^{p_{_{1}}}_{M\dot{K}_{p_{_{2}},q_{_{2}}(\cdot)}^{\alpha(\cdot),\lambda}(\R^{n})} \\
 &&\; \le \max \bigg\{
   \sup_{k_{0}< 0 \atop k_{0}\in \mathbb{Z}} 2^{-k_{0}\lambda p_{_{1}}} \Big(\sum_{k=-\infty}^{k_{0}} 2^{k\alpha(0){p_{_{1}}}}\|(1+|x|)^{-\gamma(x)}\mathscr{H}_{\beta(\cdot)}(f)\cdot\chi_{_{\scriptstyle k}}\|_{L^{^{q_{_{2}}(\cdot)}}(\mathbb{R}^{n})}^{p_{_{1}}}\Big),       \\
&&\;{\hskip 4em} \sup_{k_{0}\ge 0 \atop k_{0}\in \mathbb{Z}} \bigg( 2^{-k_{0}\lambda p_{_{1}}} \Big(\sum_{k=-\infty}^{-1} 2^{k\alpha(0){p_{_{1}}}} \|(1+|x|)^{-\gamma(x)}\mathscr{H}_{\beta(\cdot)}(f)\cdot\chi_{_{\scriptstyle k}}\|_{L^{^{q_{_{2}}(\cdot)}}(\mathbb{R}^{n})}^{p_{_{1}}}\Big)           \\
&&\;{\hskip 6em} +  2^{-k_{0}\lambda p_{_{1}}} \Big(\sum_{k=0}^{k_{0}} 2^{k\alpha_{\infty}  p_{_{1}}} \|(1+|x|)^{-\gamma(x)}\mathscr{H}_{\beta(\cdot)}(f)\cdot\chi_{_{\scriptstyle k}}\|_{L^{^{q_{_{2}}(\cdot)}}(\mathbb{R}^{n})}^{p_{_{1}}}\Big)
   \bigg)
    \bigg\}    \\
 &&\;=\max\{E_{1},E_{2}+E_{3}\},
\end{eqnarray*}
where
\begin{eqnarray*}
E_{1} &=& \sup_{k_{0}< 0 \atop k_{0}\in \mathbb{Z}} 2^{-k_{0}\lambda p_{_{1}}} \Big(\sum_{k=-\infty}^{k_{0}} 2^{k\alpha(0){p_{_{1}}}}\|(1+|x|)^{-\gamma(x)}\mathscr{H}_{\beta(\cdot)}(f)\cdot\chi_{_{\scriptstyle k}}\|_{L^{^{q_{_{2}}(\cdot)}}(\mathbb{R}^{n})}^{p_{_{1}}}\Big),  \\
 E_{2} &=& \sup_{k_{0}\ge 0 \atop k_{0}\in \mathbb{Z}}   2^{-k_{0}\lambda p_{_{1}}} \Big(\sum_{k=-\infty}^{-1} 2^{k\alpha(0){p_{_{1}}}} \|(1+|x|)^{-\gamma(x)}\mathscr{H}_{\beta(\cdot)}(f)\cdot\chi_{_{\scriptstyle k}}\|_{L^{^{q_{_{2}}(\cdot)}}(\mathbb{R}^{n})}^{p_{_{1}}}\Big),           \\
 E_{3} &=& \sup_{k_{0}\ge 0 \atop k_{0}\in \mathbb{Z}} 2^{-k_{0}\lambda p_{_{1}}} \Big(\sum_{k=0}^{k_{0}} 2^{k\alpha_{\infty}  p_{_{1}}} \|(1+|x|)^{-\gamma(x)}\mathscr{H}_{\beta(\cdot)}(f)\cdot\chi_{_{\scriptstyle k}}\|_{L^{^{q_{_{2}}(\cdot)}}(\mathbb{R}^{n})}^{p_{_{1}}}\Big).
\end{eqnarray*}

To estimate $E_{1},E_{2}$ and $E_{3}$, we need the following fact. By the condition of $\alpha(\cdot)$  and  Proposition \ref{pro.2.3}, we have

Case 1 ($j<0$),
\begin{eqnarray}
\begin{split} \label{equ.3.5}
\|f_{j}\|_{L^{^{q_{_{1}}(\cdot)}}(\R^{n})} &= 2^{-j\alpha(0)}\Big(2^{j\alpha(0)p_{_1}} \|f_{j}\|^{p_{_1}}_{L^{^{q_{_{1}}(\cdot)}}(\R^{n})}\Big)^{1/p_{_1}}\\
&\le  2^{-j\alpha(0)}\bigg(\sum_{i=-\infty}^j 2^{i\alpha(0)p_{_1}} \|f_{i}\|^{p_{_1}}_{L^{^{q_{_{1}}(\cdot)}}(\R^{n})}\bigg)^{1/p_{_1}}\\
&\le  2^{j(\lambda-\alpha(0))}\bigg(2^{-j\lambda} \Big(\sum_{i=-\infty}^j   \|2^{i\alpha(\cdot)}f_{i}\|^{p_{_1}}_{L^{^{q_{_{1}}(\cdot)}} (\R^{n})}\Big)^{1/p_{_1}}\bigg)\\
&\le  C 2^{j(\lambda-\alpha(0))} \|f\|_{M\dot{K}_{p_{_{1}},q_{_{1}}(\cdot)}^{\alpha(\cdot),\lambda}(\R^{n})}.
 \end{split}
\end{eqnarray}

Case 2 ($j\ge 0$),
\begin{eqnarray}
\begin{split} \label{equ.3.6}
\|f_{j}\|_{L^{^{q_{_{1}}(\cdot)}}(\R^{n})} &= 2^{-j\alpha_{\infty}}\Big(2^{j\alpha_{\infty} p_{_1}} \|f_{j}\|^{p_{_1}}_{L^{^{q_{_{1}}(\cdot)}}(\R^{n})}\Big)^{1/p_{_1}}\\
&\le  2^{-j\alpha_{\infty}}\bigg(\sum_{i=0}^j 2^{i\alpha_{\infty}p_{_1}} \|f_{i}\|^{p_{_1}}_{L^{^{q_{_{1}}(\cdot)}}(\R^{n})}\bigg)^{1/p_{_1}}\\
&\le  2^{j(\lambda-\alpha_{\infty})}\bigg(2^{-j\lambda} \Big(\sum_{i=-\infty}^j   \|2^{i\alpha(\cdot)}f_{i}\|^{p_{_1}}_{L^{^{q_{_{1}}(\cdot)}} (\R^{n})}\Big)^{1/p_{_1}}\bigg)\\
&\le  C 2^{j(\lambda-\alpha_{\infty})} \|f\|_{M\dot{K}_{p_{_{1}},q_{_{1}}(\cdot)}^{\alpha(\cdot),\lambda}(\R^{n})}.
 \end{split}
\end{eqnarray}

For $E_{1}$, note that $j<0$, combining (\ref{equ.3.3}) and (\ref{equ.3.5}), and using $ \alpha(0)\le \alpha_{\infty}< \lambda+n\delta_{1}$, it follows that
\begin{eqnarray*}
E_{1}
&\le& C \sup_{k_{0}< 0 \atop k_{0}\in \mathbb{Z}}  2^{-k_{0}\lambda p_{_{1}}} \bigg(\sum_{k=-\infty}^{k_{0}} 2^{k\alpha(0){p_{_{1}}}} \Big(\sum_{j=-\infty}^{k} 2^{(j-k)n\delta_{1}} \|f_{j}\|_{L^{^{q_{_{1}}(\cdot)}}(\R^{n})} \Big)^{p_{_{1}}}\bigg)  \\
&\le& C \|f\|^{p_{1}}_{M\dot{K}_{p_{_{1}},q_{_{1}}(\cdot)}^{\alpha(\cdot),\lambda}(\R^{n})} \sup_{k_{0}< 0 \atop k_{0}\in \mathbb{Z}}  2^{-k_{0}\lambda p_{_{1}}} \bigg(\sum_{k=-\infty}^{k_{0}} 2^{k\alpha(0){p_{_{1}}}} \Big(\sum_{j=-\infty}^{k} 2^{(j-k)n\delta_{1}} 2^{j(\lambda-\alpha(0))} \Big)^{p_{_{1}}}\bigg)  \\
&\le& C \|f\|^{p_{1}}_{M\dot{K}_{p_{_{1}},q_{_{1}}(\cdot)}^{\alpha(\cdot),\lambda}(\R^{n})} \sup_{k_{0}< 0 \atop k_{0}\in \mathbb{Z}}  2^{-k_{0}\lambda p_{_{1}}} \bigg(\sum_{k=-\infty}^{k_{0}} 2^{k\lambda p_{_{1}}} \Big(\sum_{j=-\infty}^{k} 2^{(j-k)(n\delta_{1}+\lambda-\alpha(0))} \Big)^{p_{_{1}}}\bigg)  \\
&\le& C \|f\|^{p_{1}}_{M\dot{K}_{p_{_{1}},q_{_{1}}(\cdot)}^{\alpha(\cdot),\lambda}(\R^{n})} \sup_{k_{0}< 0 \atop k_{0}\in \mathbb{Z}}  2^{-k_{0}\lambda p_{_{1}}} \Big(\sum_{k=-\infty}^{k_{0}} 2^{k\lambda p_{_{1}}} \Big)   \le C \|f\|^{p_{1}}_{M\dot{K}_{p_{_{1}},q_{_{1}}(\cdot)}^{\alpha(\cdot),\lambda}(\R^{n})}.
\end{eqnarray*}

We omit the estimate of $E_{2}$ since it is essentially similar to that of $E_{1}$.

Now we only simply estimate $E_{3}$, note that $j\ge 0$, combining (\ref{equ.3.3}) and (\ref{equ.3.6}), and using $ \alpha(0)\le \alpha_{\infty}< \lambda+n\delta_{1}$, we have
 \begin{eqnarray*}
E_{3}
&\le& C \sup_{k_{0}\ge 0 \atop k_{0}\in \mathbb{Z}}  2^{-k_{0}\lambda p_{_{1}}} \bigg(\sum_{k=0}^{k_{0}} 2^{k\alpha_{\infty}{p_{_{1}}}} \Big(\sum_{j=-\infty}^{k} 2^{(j-k)n\delta_{1}} \|f_{j}\|_{L^{^{q_{_{1}}(\cdot)}}(\R^{n})} \Big)^{p_{_{1}}}\bigg)  \\
&\le& C \|f\|^{p_{1}}_{M\dot{K}_{p_{_{1}},q_{_{1}}(\cdot)}^{\alpha(\cdot),\lambda}(\R^{n})} \sup_{k_{0}\ge 0 \atop k_{0}\in \mathbb{Z}}  2^{-k_{0}\lambda p_{_{1}}} \bigg(\sum_{k=0}^{k_{0}} 2^{k\lambda p_{_{1}}} \Big(\sum_{j=-\infty}^{k} 2^{(j-k)(n\delta_{1}+\lambda-\alpha_{\infty})} \Big)^{p_{_{1}}}\bigg)  \\
&\le& C \|f\|^{p_{1}}_{M\dot{K}_{p_{_{1}},q_{_{1}}(\cdot)}^{\alpha(\cdot),\lambda}(\R^{n})} \sup_{k_{0}\ge 0 \atop k_{0}\in \mathbb{Z}}  2^{-k_{0}\lambda p_{_{1}}} \Big(\sum_{k=0}^{k_{0}} 2^{k\lambda p_{_{1}}} \Big)   \le C \|f\|^{p_{1}}_{M\dot{K}_{p_{_{1}},q_{_{1}}(\cdot)}^{\alpha(\cdot),\lambda}(\R^{n})}.
\end{eqnarray*}

Combining all the estimates for $E_{i}~(i=1,2,3)$ together,  the proof of Theorem \ref{thm.1} is completed.
\end{proof}


\begin{theorem}\quad\label{thm.2}
Let $\lambda,p_{_{1}}, {p_{_{2}}}, q_{_{1}}(\cdot), q_{_{2}}(\cdot),  \beta(x), C_{\infty} $  be as in Theorem \ref{thm.1}. Suppose that $\alpha\in L^{\infty}(\mathbb{R}^{n})$ be log-H\"{o}lder continuous both at the origin and at infinity, and
$ \lambda-n\delta_{2}<\alpha(0)\le\alpha_{\infty}$,    where $\delta_{2}\in (0, 1/(q_{2})_{+})$ is the constant appearing in (\ref{equ.2.9}).
Then  $$\Big\|(1+|x|)^{-\gamma(x)}\mathscr{H}^{\ast}_{\beta(\cdot)}(f) \Big\|_{M\dot{K}_{p_{_{2}},q_{_{2}}(\cdot)}^{\alpha(\cdot),\lambda}(\R^{n})}\le C\|f\|_{M\dot{K}_{p_{_{1}},q_{_{1}}(\cdot)}^{\alpha(\cdot),\lambda}(\R^{n})}.$$

\end{theorem}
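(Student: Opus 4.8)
The plan is to follow the architecture of the proof of Theorem \ref{thm.1}, adapting each step to the ``dual'' geometry of $\mathscr{H}^{\ast}_{\beta(\cdot)}$: since the integration in (\ref{equ.1.b}) runs over $\{|t|\ge|x|\}$, for $x\in A_k$ only the pieces $f_j=f\chi_{j}$ with $j\ge k$ contribute. First I would decompose $f=\sum_{j}f_j$ and estimate pointwise on $A_k$. For $t\in A_j$ with $j\ge k$ one has $|t|\sim 2^{j}$, so $|t|^{-(n-\beta(x))}\sim 2^{-jn}2^{j\beta(x)}$; combining this with the generalized H\"older inequality (Lemma \ref{Lem.1}), $\int_{A_j}|f|\le C\|f_j\|_{L^{q_{_1}(\cdot)}}\|\chi_{j}\|_{L^{q'_{_1}(\cdot)}}$, gives, for $x\in A_k$,
\[
|\mathscr{H}^{\ast}_{\beta(\cdot)}(f)(x)|\chi_{k}(x)\le C\,\chi_{k}(x)\sum_{j\ge k}2^{-jn}\,2^{j\beta(x)}\,\|f_j\|_{L^{q_{_1}(\cdot)}}\,\|\chi_{j}\|_{L^{q'_{_1}(\cdot)}} .
\]
This is the analogue of (\ref{equ.3.1}), with the decisive difference that the scale $2^{j\beta(x)}$ now carries the \emph{input} index $j\ge k$ rather than the fixed output scale $|x|^{\beta(x)}\sim2^{k\beta(x)}$.

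Second, to absorb the factor $2^{j\beta(x)}$ together with the weight, I would route through the Riesz potential as in (\ref{equ.3.2}): for $x\in A_k$ with $k\le j$ one checks, by integrating over $A_j\subset B_j$ (where $|x-t|\sim|t|\sim 2^{j}$) and using (\ref{equ.3.2}) in the boundary case $j=k$, that $2^{j\beta(x)}\chi_{k}(x)\sim I_{\beta(\cdot)}(\chi_{B_j})(x)\chi_{k}(x)$. The weighted Sobolev estimate of Proposition \ref{pro.2.2} then controls $\|(1+|x|)^{-\gamma(x)}I_{\beta(\cdot)}(\chi_{B_j})\|_{L^{q_{_2}(\cdot)}}$ by $\|\chi_{B_j}\|_{L^{q_{_1}(\cdot)}}$, with $\gamma(x)$ as in (\ref{equ.2.7}).

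The crucial and genuinely new point is that the \emph{unlocalized} Sobolev bound is too lossy here: feeding $\|\chi_{B_j}\|_{L^{q_{_1}(\cdot)}}$ back through Lemma \ref{Lem.2}(II) on $B_j$ makes $2^{-jn}\|\chi_{j}\|_{L^{q'_{_1}(\cdot)}}\|\chi_{B_j}\|_{L^{q_{_1}(\cdot)}}$ collapse to a constant, producing no decay in $j-k$ (this would only close under the stronger hypothesis $\alpha_\infty>\lambda$). To recover the decay I would keep the localization to $A_k$ and prove the analogue of (\ref{equ.3.3}),
\[
\Big\|(1+|x|)^{-\gamma(x)}\mathscr{H}^{\ast}_{\beta(\cdot)}(f)\chi_{k}\Big\|_{L^{q_{_2}(\cdot)}}\le C\sum_{j\ge k}2^{(k-j)n\delta_{2}}\,\|f_j\|_{L^{q_{_1}(\cdot)}},
\]
where the decay now arises on the \emph{output} side: the ratio $\|\chi_{A_k}\|_{L^{q_{_2}(\cdot)}}/\|\chi_{B_j}\|_{L^{q_{_2}(\cdot)}}\le C(|A_k|/|B_j|)^{\delta_2}\sim 2^{(k-j)n\delta_2}$ is supplied by (\ref{equ.2.9}) (Lemma \ref{Lem.2}(I)) with $S=A_k\subset B=B_j$, combined with the two-sided bound $I_{\beta(\cdot)}(\chi_{B_j})(x)\sim 2^{j\beta(x)}$ on $A_k$ and the Sobolev balance $\|\chi_{B_j}\|_{L^{q_{_1}(\cdot)}}\sim 2^{j\beta}\|\chi_{B_j}\|_{L^{q_{_2}(\cdot)}}$ for balls (reflecting $1/q_{_2}=1/q_{_1}-\beta/n$). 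Establishing this localized weighted Sobolev estimate, with the weight $(1+|x|)^{-\gamma(x)}$ compensating the non-constancy of $q_{_2}(\cdot)$ at infinity, is the main obstacle.

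Finally, with the displayed estimate in hand the argument closes exactly as for Theorem \ref{thm.1}: I would insert it into the $\max\{E_1,E_2+E_3\}$ decomposition of Proposition \ref{pro.2.3}, bound $\|f_j\|_{L^{q_{_1}(\cdot)}}$ by (\ref{equ.3.5}) when $j<0$ and by (\ref{equ.3.6}) when $j\ge0$, and sum the geometric series over $j\ge k$. Here the hypothesis $\lambda-n\delta_2<\alpha(0)\le\alpha_\infty$ is precisely what is needed: it yields $\lambda-\alpha(0)-n\delta_2<0$ and $\lambda-\alpha_\infty-n\delta_2<0$, so that $\sum_{j\ge k}2^{(k-j)n\delta_2}2^{j(\lambda-\alpha(0))}\sim 2^{k(\lambda-\alpha(0))}$ (and likewise with $\alpha_\infty$), after which the outer sums over $k$ telescope against $2^{-k_0\lambda p_{_1}}$ exactly as in the estimates of $E_1,E_2,E_3$.
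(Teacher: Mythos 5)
Your overall architecture matches the paper's: the decomposition over $j\ge k$, the target estimate
$\|(1+|x|)^{-\gamma(x)}\mathscr{H}^{\ast}_{\beta(\cdot)}(f)\chi_{k}\|_{L^{q_{2}(\cdot)}(\R^n)}\le C\sum_{j\ge k}2^{(k-j)n\delta_{2}}\|f_j\|_{L^{q_{1}(\cdot)}(\R^n)}$
is exactly the paper's (\ref{equ.3.9}), and your endgame (Proposition \ref{pro.2.3}, the bounds (\ref{equ.3.5})--(\ref{equ.3.6}), and the geometric series driven by $\lambda-n\delta_2<\alpha(0)\le\alpha_\infty$) is identical to the paper's. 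You also correctly diagnose why the naive transplant of the proof of Theorem \ref{thm.1} fails: applying the global weighted Sobolev bound to $I_{\beta(\cdot)}(\chi_{B_j})$ on the \emph{output} side destroys the localization to $A_k$ and yields no decay in $j-k$. But you then leave the central step unproved: you declare the ``localized weighted Sobolev estimate'' to be ``the main obstacle'' and only list ingredients (a two-sided bound $I_{\beta(\cdot)}(\chi_{B_j})(x)\sim 2^{j\beta(x)}$ on $A_k$, a ``Sobolev balance'' $\|\chi_{B_j}\|_{L^{q_1(\cdot)}}\sim 2^{j\beta}\|\chi_{B_j}\|_{L^{q_2(\cdot)}}$) that are themselves not established in this variable-exponent, variable-$\beta$, weighted setting. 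In particular, Lemma \ref{Lem.2}(I) transfers the ratio $(|S|/|B|)^{\delta_2}$ only for characteristic functions, not for $\chi_{S}$ multiplied by $(1+|x|)^{-\gamma(x)}2^{j\beta(x)}$, so the decay $2^{(k-j)n\delta_2}$ does not follow from the lemmas you invoke. This is a genuine gap.

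The missing idea is to perform the Riesz-potential substitution in the \emph{input} variable and apply the weighted Sobolev theorem with the \emph{dual} exponents. In (\ref{equ.3.7})--(\ref{equ.3.9}) the paper keeps the kernel $|t|^{\beta-n}$ inside the H\"older norm over $t\in A_j$, uses (\ref{equ.3.8}) to write $|t|^{\beta-n}\chi_{A_j}(t)\le C2^{-jn}I_{\beta(\cdot)}(\chi_{B_j})(t)$, and then applies Proposition \ref{pro.2.2} with the pair $(q_2',q_1')$ in place of $(p,q)$ --- legitimate because $1/q_1'=1/q_2'-\beta/n$ --- to obtain $\|(1+|\cdot|)^{-\gamma}I_{\beta(\cdot)}(\chi_{B_j})\|_{L^{q_1'(\cdot)}(\R^n)}\le C\|\chi_{B_j}\|_{L^{q_2'(\cdot)}(\R^n)}$. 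The output side then contributes only the plain factor $\|\chi_k\|_{L^{q_2(\cdot)}(\R^n)}\le\|\chi_{B_k}\|_{L^{q_2(\cdot)}(\R^n)}$; Lemma \ref{Lem.2}(II) converts $2^{-jn}\|\chi_{B_j}\|_{L^{q_2'(\cdot)}(\R^n)}$ into $\|\chi_{B_j}\|_{L^{q_2(\cdot)}(\R^n)}^{-1}$, and (\ref{equ.2.9}) applied to $B_k\subset B_j$ in the $q_2$-scale produces exactly the decay $2^{(k-j)n\delta_2}$, with no localized Sobolev estimate required. With that substitution in place of your second and third steps, the rest of your argument closes as written.
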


\begin{proof}\quad Similar to the proof of Theorem \ref{thm.1},  therefore, we only give a simple proof.

For simplicity, for any $f\in M\dot{K}_{p_{_{1}},q_{_{1}}(\cdot)}^{\alpha(\cdot),\lambda}(\R^{n})$, we  write
$$f(x)=\sum_{j=-\infty}^{\infty}f(x)\cdot\chi_{j}(x) =\sum_{j=-\infty}^{\infty}f_{j}(x).
$$

By (\ref{equ.1.b}) and Lemma \ref{Lem.1}, we have
\begin{equation} \label{equ.3.7}
\begin{split}
&\Big|(1+|x|)^{-\gamma(x)}\mathscr{H}^{\ast}_{\beta(\cdot)}(f)(x)\cdot\chi_{_{\scriptstyle k}}(x)\Big|
 \le C\int_{\R^{n}\setminus B_{k}} |f(t)| |x|^{\beta(x)-n} \mathrm{d}t \cdot (1+|x|)^{-\gamma(x)}\chi_{_{\scriptstyle k}}(x)   \\
& \le C\sum_{j=k+1}^{\infty}  \|f_{j}\|_{L^{^{q_{_{1}}(\cdot)}}(\R^{n})} \Big\|(1+|x|)^{-\gamma(x)} |\cdot|^{\beta(x)-n}\chi_{_{\scriptstyle{j}}} (\cdot) \Big\|_{L^{^{q'_{_{1}}(\cdot)}}(\R^{n})}  \cdot\chi_{_{\scriptstyle{k}}}(x).
\end{split}
\end{equation}
Similar to (\ref{equ.3.2}), we give
\begin{equation} \label{equ.3.8}
\begin{split}
 I_{\beta(\cdot)}(\chi_{_{B_{j}}})(x)  &\ge I_{\beta(\cdot)}(\chi_{_{B_{j}}})(x)\cdot\chi_{_{B_{j}}}(x)     \ge C|x|^{\beta(x)} \cdot\chi_{_{j}}(x).
\end{split}
\end{equation}

Applying Proposition \ref{pro.2.2},  Lemma \ref{Lem.2}, (\ref{equ.2.9}),  (\ref{equ.3.7})  and (\ref{equ.3.8}), we obtain
\begin{equation} \label{equ.3.9}
\begin{split}
& \Big\|(1+|x|)^{-\gamma(x)}\mathscr{H}^{\ast}_{\beta(\cdot)}(f)\cdot\chi_{_{\scriptstyle k}} (\cdot) \Big\|_{L^{^{q_{_{2}}(\cdot)}}(\R^{n})} \\
  & \le C\sum_{j=k+1}^{\infty}   \|f_{j}\|_{L^{^{q_{_{1}}(\cdot)}}(\R^{n})} \|\chi_{_{\scriptstyle{k}}}\|_{L^{^{q_{_{2}}(\cdot)}}(\R^{n})} \cdot 2^{^{-jn}} \Big\|(1+|x|)^{-\gamma(x)} I_{\beta(\cdot)}(\chi_{_{B_{j}}})\Big\|_{L^{^{q'_{_{1}}(\cdot)}}(\R^{n})}   \\
& \le C \sum_{j=k+1}^{\infty}\|f_{j}\|_{L^{^{q_{_{1}}(\cdot)}}(\R^{n})} \frac{\|\chi_{_{B_{k}}}\|_{L^{^{q_{_{2}}(\cdot)}}(\R^{n})}}{\|\chi_{_{B_{j}}}\|_{L^{^{q_{_{2}}(\cdot)}}(\R^{n})}}    \le C\sum_{j=k+1}^{\infty} 2^{(k-j)n\delta_{2}} \|f_{j}\|_{L^{^{q_{_{1}}(\cdot)}}(\R^{n})}.
\end{split}
\end{equation}

By (\ref{equ.3.4}) and Proposition \ref{pro.2.3},  we have
\begin{eqnarray*}
&&\; \Big\|(1+|x|)^{-\gamma(x)}\mathscr{H}^{\ast}_{\beta(\cdot)}(f) \Big\|^{p_{_{1}}}_{M\dot{K}_{p_{_{2}},q_{_{2}}(\cdot)}^{\alpha(\cdot),\lambda}(\R^{n})} \le \max\{E_{1},E_{2}+E_{3}\},
\end{eqnarray*}
where
\begin{eqnarray*}
E_{1} &=& \sup_{k_{0}< 0 \atop k_{0}\in \mathbb{Z}} 2^{-k_{0}\lambda p_{_{1}}} \Big(\sum_{k=-\infty}^{k_{0}} 2^{k\alpha(0){p_{_{1}}}}\|(1+|x|)^{-\gamma(x)}\mathscr{H}^{\ast}_{\beta(\cdot)}(f)\cdot\chi_{_{\scriptstyle k}}\|_{L^{^{q_{_{2}}(\cdot)}}(\mathbb{R}^{n})}^{p_{_{1}}}\Big),  \\
 E_{2} &=& \sup_{k_{0}\ge 0 \atop k_{0}\in \mathbb{Z}}   2^{-k_{0}\lambda p_{_{1}}} \Big(\sum_{k=-\infty}^{-1} 2^{k\alpha(0){p_{_{1}}}} \|(1+|x|)^{-\gamma(x)}\mathscr{H}^{\ast}_{\beta(\cdot)}(f)\cdot\chi_{_{\scriptstyle k}}\|_{L^{^{q_{_{2}}(\cdot)}}(\mathbb{R}^{n})}^{p_{_{1}}}\Big),           \\
 E_{3} &=& \sup_{k_{0}\ge 0 \atop k_{0}\in \mathbb{Z}} 2^{-k_{0}\lambda p_{_{1}}} \Big(\sum_{k=0}^{k_{0}} 2^{k\alpha_{\infty}  p_{_{1}}} \|(1+|x|)^{-\gamma(x)}\mathscr{H}^{\ast}_{\beta(\cdot)}(f)\cdot\chi_{_{\scriptstyle k}}\|_{L^{^{q_{_{2}}(\cdot)}}(\mathbb{R}^{n})}^{p_{_{1}}}\Big).
\end{eqnarray*}

For $E_{1}, E_{2}$ and $E_{3}$,   combining  (\ref{equ.3.5}), (\ref{equ.3.6}) and (\ref{equ.3.9}), and using $ \lambda-n\delta_{2}<\alpha(0)\le\alpha_{\infty}$, we have
$$
 E_{i}    \le C \|f\|^{p_{1}}_{M\dot{K}_{p_{_{1}},q_{_{1}}(\cdot)}^{\alpha(\cdot),\lambda}(\R^{n})},  (i=1,2, 3).
 $$

Combining all the estimates for $E_{i}~(i=1,2,3)$ together,  the proof of Theorem \ref{thm.2} is completed.
 \end{proof}


In particular, when $\gamma(x)=0$, $\alpha(\cdot)$ and $\beta(\cdot)$ are constant exponent, the  main results above are proved by Zhang and Wu in \cite{ZW}. Let  $\alpha(\cdot)$ be constant exponent, then the above results can be founded in \cite{W1}.  And when $\lambda=0$, our main results are also valid.

\begin{acknowledgments}
 The authors cordially  thank the anonymous referees who gave valuable  suggestions and useful comments which have lead to the improvement of this paper.  Meanwhile, the authors also would like to thank  the partial support from  National Natural Science Foundation of China (Grant No.11571160) and   Mudanjiang Normal University (No.QY2014007).
\end{acknowledgments}


\bigskip

\end{document}